\newtheorem*{rep@theorem}{\rep@title}
\newcommand{\newreptheorem}[2]{%
\newenvironment{rep#1}[1]{%
 \def\rep@title{#2 \ref{##1}}%
 \begin{rep@theorem}}%
 {\end{rep@theorem}}}
\newtheorem{theorem}{Theorem}[section]
\newtheorem{lemma}[theorem]{Lemma}
\newtheorem{proposition}[theorem]{Proposition}
\newtheorem{observation}[theorem]{Observation}
\newtheorem{corollary}[theorem]{Corollary}
\newtheorem{conjecture}[theorem]{Conjecture}
\theoremstyle{remark}
\newtheorem{remark}[theorem]{Remark}
\newtheorem{example}[theorem]{Example}
\begin{document}

\makeatletter

\makeatother
\author{Ron Aharoni}
\address{Department of Mathematics\\ Technion, Haifa, Israel}
\thanks{The research of the first author was  supported by  an ISF grant, BSF grant no. 2006099 and by the Discount Bank Chair at the Technion.}
\email[Ron Aharoni]{raharoni@gmail.com}

\author{Matthew DeVos}
\address{Department of Mathematics \\ Simon Fraser University, Burnaby, B.C., Canada}
\email[Matthew DeVos]{mdevos@sfu.ca}

\author{Ron Holzman}
\address{Department of Mathematics \\ Technion, Haifa, Israel }

\email[Ron Holzman]{holzman@technion.ac.il}

\title{Rainbow triangles and the Caccetta-H\"aggkvist conjecture}

\begin{abstract}
A famous conjecture of Caccetta and H\"aggkvist is that in a digraph on $n$ vertices and minimum out-degree at least $\frac{n}{r}$ there is a directed cycle of length $r$ or less. We consider the following generalization: in an undirected graph on $n$ vertices, any collection of $n$ disjoint sets of edges, each of size at least $\frac{n}{r}$, has  a rainbow cycle of length $r$ or less. We focus on the case $r=3$, and prove the existence of a rainbow triangle under somewhat stronger conditions than in the conjecture. For any fixed $k$ and large enough $n$, we determine the maximum number of edges in an $n$-vertex edge-coloured graph where all colour classes have size at most $k$ and there is no rainbow triangle. Moreover, we characterize the extremal graphs for this problem.

\end{abstract}

\maketitle

\section{Introduction}


The following conjecture is one of the best known in graph theory:

\begin{conjecture}[Caccetta-H\"aggkvist \cite{ch}]
For every positive integer $r$, every digraph on $n$ vertices with minimum out-degree at least $\frac{n}{r}$ has a directed cycle of length at most~$r$.
\end{conjecture}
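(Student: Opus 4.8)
The final statement is the Caccetta--H\"aggkvist conjecture, which remains one of the central open problems in digraph theory; what follows is therefore a plan of attack, together with an honest account of where every such plan currently stalls, rather than a complete proof. The natural first move is to normalise the hypotheses: by deleting out-arcs one may assume every out-degree equals $d:=\lceil n/r\rceil$, and since directed cycles of length $\le r$ are forbidden there are in particular no digons, so we work inside an oriented graph whose shortest directed cycle has length exceeding $r$. I would then concentrate on the emblematic and least-understood case $r=3$---a directed triangle forced by minimum out-degree at least $\tfrac{n}{3}$---and treat general $r$ separately, since no reduction of the general case to the triangle case is known.

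For the heart of the matter ($r=3$) I would run a double-counting/averaging argument. Assume there is no directed triangle. The number of directed two-paths $x\to y\to z$ equals $\sum_{y} d^-(y)\,d^+(y)$, and triangle-freeness forbids the closing arc $z\to x$ for every one of them. Selecting a vertex of minimum out-degree and accounting carefully for its first and second out-neighbourhoods---in the spirit of Hamidoune's method---while using convexity (Cauchy--Schwarz) to bound $\sum_y d^-(y)d^+(y)$ from below, one extracts a lower bound on $n$ in terms of $\delta^+$ that one hopes to drive down to $3\delta^+$. In parallel I would keep the additive-combinatorics route for the vertex-transitive (Cayley) case, where the out-neighbourhood is a fixed set $A\subseteq\mathbb{Z}_n$ with $v\to v+a$ for $a\in A$: triangle-freeness becomes $0\notin A+A+A$, equivalently $(A+A)\cap(-A)=\varnothing$, which a Cauchy--Davenport/Kneser estimate contradicts once $|A|$ comfortably exceeds $n/3$. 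This structured case is in fact known (Hamidoune and successors), and I would treat it as the model that the general argument must imitate.

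For general $r$, lacking such a reduction, I would attempt a direct breadth-first layering: fix a vertex $v$ and consider the sets $L_i$ of vertices reachable from $v$ in exactly $i$ steps, $i=1,\dots,r$. Minimum out-degree at least $\tfrac{n}{r}$ suggests each reachable set grows, and a pigeonhole comparison of the cumulative size $|L_1\cup\dots\cup L_r|$ against $n$ should, one hopes, force some vertex of $L_i$ to send an arc back into an earlier layer, yielding a closed walk---hence a directed cycle---of length at most $r$.

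The hard part is exactly that none of these estimates is tight. The extremal configuration is the circulant on $\mathbb{Z}_n$ with arc set $\{\,v\to v+i : 1\le i\le d-1\,\}$, whose shortest directed cycle has length precisely $r$; against it the double counting loses a constant factor, and the Cauchy--Davenport bound is barely short (for $|A|=n/3$ one finds $|A+A|+|{-A}|\ge n-1$, just under the strict majority $>n$ needed to force a common element). Consequently every elementary approach yields only a threshold $\delta^+\ge c\,n$ with $c$ strictly larger than $\tfrac1r$---around $0.38\,n$ by hand for $r=3$, lowered toward but never to $\tfrac{n}{3}$ by the refinements of Shen and of Hladk\'y--Kr\'al'--Norin. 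Closing this last constant-factor gap is, as far as I can see, beyond elementary means; a full proof of the conjecture is not within reach here, and the realistic target is the partial threshold together with the sharp analysis in the vertex-transitive case.
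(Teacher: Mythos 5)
You have correctly diagnosed the situation: the statement you were given is the Caccetta--H\"aggkvist conjecture itself, which the paper states as an open problem and does not prove. The paper's own content is a \emph{generalization} (the rainbow-cycle conjecture) together with partial results: for $r=3$ it proves the rainbow version under strengthened hypotheses ($\tfrac{9}{8}n$ colour classes of size $\tfrac{n}{3}$, or $n$ classes of size $\tfrac{2n}{5}$), and it determines $g(n,k)$ for $n$ large relative to $k$. So there is no proof in the paper against which your proposal could be matched, and your candid admission that no complete proof is within reach is the correct assessment; your survey of the landscape (triviality for $r\le 2$, Shen's $r\ge\sqrt{2n}$ result, the $0.3465n$ bound of Hladk\'y--Kr\'al'--Norin for $r=3$, Hamidoune's resolution of the vertex-transitive case) is accurate and consistent with the paper's introduction.

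One concrete step in your sketch would fail as written, and it is worth flagging since it is the classical trap in this problem. In the breadth-first layering for general $r$, an arc from a vertex $u\in L_i$ back into an earlier layer $L_j$ ($j<i$) does \emph{not} yield a closed directed walk: the BFS path runs from $v$ to $u$, and the arc lands on some $w\in L_j$ that need not lie on that path, nor is there any guaranteed directed path from $w$ back to $u$ or to $v$. Only an arc from $L_i$ into $v$ itself (or into the specific $v$--$u$ path) closes a cycle of length at most $i+1$, and the out-degree hypothesis is far too weak to force that by pigeonhole --- the circulant extremal example you cite has all layers saturated with back-arcs between layers yet girth exactly $r$. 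This is precisely why the layer-growth arguments (Chv\'atal--Szemer\'edi, Shen) only give girth bounds of the form $r\le \tfrac{n}{\delta^+}+O(1)$ rather than the conjecture, and why, as you say, every elementary count loses a constant factor against the circulant. Your Cauchy--Davenport analysis of the Cayley case is essentially right (for $n$ prime and $|A|\ge\tfrac{n+2}{3}$ one gets $A+A+A=\mathbb{Z}_n\ni 0$; at $|A|=\tfrac{n}{3}$ exactly the bound falls short by $2$, and Kneser/Hamidoune-type arguments are needed), but it does not transfer to general digraphs. In short: no gap in your self-assessment, one genuine flaw in the layering heuristic, and nothing in the paper that proves the statement either.
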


This conjecture is trivial for $r \le 2$ and has been proved for $r \ge \sqrt{2n}$ by Shen \cite{s00}.  The special case  $r=3$ has received considerable attention, see e.g., \cite{bon,hhk,r13,s98}; the best known bound in this case is given by a theorem of Hladk\'y, Kr\'al' and Norin \cite{hkn} asserting that every simple digraph on $n$ vertices with minimum out-degree at least $0.3465n$ contains a directed triangle.

The following conjecture of the first author (unpublished),  is a  generalization of this conjecture, which concerns undirected, simple graphs. It replaces the notion of ``directed cycle'' by that of ``rainbow cycle''.  Define an \emph{edge-coloured graph} to be a graph $G = (V,E)$ equipped with a distinguished partition $\{C_1, C_2, \ldots, C_p\}$ of $E$, called a \emph{colour partition}. The sets $C_i$ are called \emph{colour classes},  the indices $i \in \{1,\ldots,p\}$ are called \emph{colours}, and we say that an edge $e$ has colour $i$ if $e \in C_i$.  A subgraph $H$ of $G$ is called \emph{rainbow} if distinct edges of $H$ have distinct colours.

\begin{conjecture}
\label{aharoniconj}
Let $r$ be a positive integer and let $G$ be an edge-coloured graph on $n$ vertices.  If there are $n$ colour classes, all of size at least $\frac{n}{r}$, then $G$ contains a rainbow cycle of length at most $r$.
\end{conjecture}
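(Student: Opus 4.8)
The plan is to derive Conjecture~\ref{aharoniconj} from the Caccetta-H\"aggkvist conjecture itself, or at least to reduce the rainbow statement to the directed one. The natural first step is to build a digraph out of the edge-coloured graph: I would try to orient each colour class so that the resulting out-degrees are large, then apply the directed statement and translate a short directed cycle back into a rainbow cycle. Concretely, pick one representative edge from each colour class and orient it; since there are $n$ colour classes each of size at least $\frac{n}{r}$, a counting/averaging argument should let me select representatives and orientations so that every vertex has out-degree at least $\frac{n}{r}$ in the resulting digraph $D$. By the Caccetta-H\"aggkvist conjecture applied to $D$, there is a directed cycle of length at most $r$; since distinct arcs of $D$ come from distinct colour classes (at most one representative per colour), the underlying edges form a rainbow cycle of length at most $r$ in $G$.

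The hard part will be arranging the orientation so that the out-degrees are simultaneously large at every vertex, not merely on average. Having $n$ colour classes each of size $\ge \frac{n}{r}$ gives a total of at least $\frac{n^2}{r}$ colour-incidences, hence average out-degree $\frac{n}{r}$ if each colour contributes one oriented arc; but a worst-case distribution could starve some vertices. To handle this I would set up a bipartite/flow or Hall-type assignment: think of colours as agents, each wishing to be ``charged'' to the tail of one of its edges, and require each vertex to receive at least $\frac{n}{r}$ charges. I expect to verify the relevant deficiency condition by a counting argument, using that every colour class is a nonempty edge set (so it touches at least two vertices) and that there are exactly $n$ classes and $n$ vertices. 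A cleaner alternative is to use a fractional relaxation: assign each colour class a uniform distribution over the tails of its edges under some fixed orientation, obtaining a fractional digraph with all out-degrees exactly $\frac{n}{r}$, and then invoke a fractional or weighted version of Caccetta-H\"aggkvist if available; rounding, however, is exactly where the difficulty concentrates.

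A second obstacle is that the reduction must preserve the \emph{rainbow} property, i.e.\ I must ensure no colour is used twice along the short cycle produced. Selecting a single representative arc per colour guarantees this automatically, which is why the representative-selection framing is attractive; but it means I cannot use two edges of the same colour even when that would help boost an out-degree, which tightens the assignment problem above. If a direct reduction proves too lossy, the fallback plan is to prove the $r=3$ case directly, which is the focus announced in the abstract: here one seeks a rainbow triangle, and the conjecture becomes a statement about three colour classes of size $\ge \frac{n}{3}$ whose edges cannot avoid forming a triangle with three distinct colours. For this special case I would look for a counting or entropy argument on triples, or an application of the known partial results on $r=3$ (the Hladk\'y--Kr\'al'--Norin bound) combined with the orientation reduction, accepting the ``somewhat stronger conditions'' flagged in the abstract rather than the full conjectural threshold.
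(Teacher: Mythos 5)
There is a fundamental problem before any details: the statement you are trying to prove is an \emph{open conjecture}, and the paper contains no proof of it --- only partial results. Your plan is moreover backwards with respect to the known implication. The paper derives the Caccetta--H\"aggkvist conjecture \emph{from} Conjecture~\ref{aharoniconj}, by taking the colour classes to be the out-stars $\delta^+_D(v)$; Caccetta--H\"aggkvist is thus the special case in which every colour class is a star whose edges can all be oriented away from a common vertex. General colour classes are arbitrary edge sets, and this is exactly the extra strength of the rainbow statement, so a reduction in your direction cannot be expected to work --- and even if it did, Caccetta--H\"aggkvist is itself unproved (already for $r=3$), so you would obtain at best a conditional result, not a proof.

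The reduction also fails on its own terms, at the step you flag as ``the hard part.'' If each of the $n$ colour classes contributes exactly one representative arc, the digraph $D$ has exactly $n$ arcs, so the \emph{average} out-degree is $1$, not $\frac{n}{r}$: your count of ``$\frac{n^2}{r}$ colour-incidences'' tallies edges available in the classes, not arcs actually selected. To achieve minimum out-degree $\lceil \frac{n}{r}\rceil$ you need at least $n\lceil \frac{n}{r}\rceil$ arcs, i.e.\ roughly $\frac{n}{r}$ arcs per colour class, and no Hall-type assignment can avoid this. But once a colour contributes several arcs, a directed cycle of length at most $r$ may traverse two arcs of the same colour (e.g.\ two adjacent edges $uv, vw$ of one class oriented $u\to v\to w$), so the cycle returned by Caccetta--H\"aggkvist need not be rainbow; this is precisely why the rainbow conjecture is a strict generalization rather than a reformulation. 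Your fallback for $r=3$ also misreads the statement: it concerns $n$ colour classes each of size at least $\frac{n}{3}$, not three classes. The paper's actual partial results (Theorem~\ref{chapprox}) take an entirely different, unconditional route: since in a graph without rainbow triangles every triangle has a majority colour, the number of triangles is at most $\frac{1}{2}m(k-1)$ when classes have size at most $k$ (Lemma~\ref{triangobs}), and playing this upper bound against Goodman's lower bound (Theorem~\ref{goodman}) yields a rainbow triangle under the slightly strengthened hypotheses ($\frac{9}{8}n$ classes of size $\frac{n}{3}$, or $n$ classes of size $\frac{2n}{5}$). If you want a viable project here, that counting scheme --- not an orientation reduction --- is the road the paper travels.
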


To see that Conjecture \ref{aharoniconj} implies the original, let $r$ be a positive integer and let $D$ be a directed simple graph on a set $V$ of $n$ vertices with minimum outdegree at least $\frac{n}{r}$.  Let $G$ be the undirected underlying graph of $D$ (which has the same vertex and edge set).  Extend $G$ to an edge-coloured graph with the colour partition $\{ \delta^+_D(v) \mid v \in V \}$ (here $\delta^+_D(v)$ denotes the set of edges of $D$ incident with $v$ and  directed away from it).  Conjecture \ref{aharoniconj} implies that $G$ has a rainbow cycle of length at most $r$, and it is easy to see that it corresponds to a directed cycle of length at most $r$ in the original digraph $D$.

Since the Caccetta-H\"aggkvist conjecture is sharp, if true, so is this conjecture. The standard example showing
that the requirement on the size of the colour classes cannot be relaxed is obtained by taking a cycle of length $n=kr+1$ with vertices $v_1,\ldots ,v_{n}$, and defining  $C_i$ ($1 \le i \le n$) to be the  set of all edges $v_iv_{i+j},~ 1 \le j\le k$ (indices taken cyclically).
Conjecture \ref{aharoniconj} is sharp also in another sense, namely  the requirement on the number of colours cannot be relaxed.
An example showing that $n-1$ colours do not suffice is
obtained by adding a new vertex $z$, which is not part of the cycle, to the previous example (so now $n=kr+2$), and adding to each $C_i$ ($1 \le i \le n-1$) the edge $v_iz$. (Below, in Example \ref{example35}, the case $r=3$ of this example will serve yet another purpose.)

In this paper we focus on the case $r=3$ of Conjecture \ref{aharoniconj}. This leads us to seek conditions on an edge-coloured graph that guarantee the existence of a rainbow triangle. Such conditions have been studied in the literature, e.g., \cite{ess,ga,gsa,gsi,lnxz,l,lw}.  The feature special to the present paper is that the size of the colour classes plays the key role.

\section{Two observations}
Some straightforward arguments give reasonably good approximations to Conjecture \ref{aharoniconj} for the special case  $r=3$.  Henceforth we use the term \emph{triangle} for a subgraph isomorphic to $K_3$.  The  case  $r=3$ of Conjecture \ref{aharoniconj} says that every edge-coloured graph on $n$ vertices with $n$ colour classes, each of size at least $\frac{n}{3}$, contains a rainbow triangle.  The following theorem gives two approximations to this.

\begin{theorem}
\label{chapprox}
Let $G$ be an edge-coloured graph on $n$ vertices.  If either of the conditions below is satisfied, then $G$ has a rainbow
triangle.
\begin{enumerate}
\item There are at least $\frac{9}{8}n$ colour classes, each of size at least $\frac{n}{3}$.
\item There are at least $n$ colour classes, each of size at least $\frac{2n}{5}$.
\end{enumerate}
\end{theorem}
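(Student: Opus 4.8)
The plan is to argue the contrapositive: assuming $G$ has no rainbow triangle, I will bound the number of edges from above, and then convert this via the size hypothesis into a bound on the number of colour classes. The engine is a single counting inequality. Since $G$ has no rainbow triangle, every triangle of $G$ contains two edges of a common colour, and as any two edges of a triangle share a vertex, each triangle contains a \emph{monochromatic cherry}: a pair of equally coloured edges meeting at a vertex, whose two far endpoints are joined by the third edge of the triangle. Such a closed cherry determines its triangle uniquely (the third edge is forced), so the number of triangles is at most the number of monochromatic cherries. Each monochromatic cherry is in turn a pair of edges from a single colour class, so the number of triangles is at most $\sum_i \binom{|C_i|}{2}$.

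To turn this into a contradiction I will pair it with a lower bound on the number of triangles coming from the edge count. Writing $m=|E|$ and $d(v)$ for the degree of $v$, the estimate $|N(u)\cap N(v)|\ge d(u)+d(v)-n$ for an edge $uv$, summed over all edges, gives $3\cdot(\text{number of triangles})=\sum_{uv\in E}|N(u)\cap N(v)|\ge \sum_v d(v)^2-nm\ge \frac{4m^2}{n}-nm$, the last inequality being Cauchy--Schwarz. Combining the two bounds yields $\frac{m(4m-n^2)}{3n}\le \sum_i\binom{|C_i|}{2}$, an inequality that must hold whenever $G$ is rainbow-triangle-free.

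It remains to insert the hypotheses. Under $(1)$, at least $\frac{9}{8}n$ classes each of size at least $\frac{n}{3}$ force $m\ge\frac{3}{8}n^2$; under $(2)$, at least $n$ classes each of size at least $\frac{2n}{5}$ force $m\ge\frac{2}{5}n^2$. In each case I would feed the resulting lower bound on $m$, together with the per-class lower bound $|C_i|\ge\frac{n}{3}$ (resp.\ $\frac{2n}{5}$), into the combined inequality and verify that it fails, yielding the rainbow triangle. A short computation shows that both thresholds $\frac{3}{8}n^2$ and $\frac{2}{5}n^2$ are exactly the points at which the triangle lower bound meets the cherry upper bound when all classes have the minimum allowed size.

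The main obstacle is precisely this last optimisation. The quantity $\sum_i\binom{|C_i|}{2}$ is largest when a few colour classes are very large (for instance, when a class is a near-spanning star), and for such distributions the crude Cauchy--Schwarz triangle bound is far from tight. The delicate point is therefore to show that the uniform, minimum-size configuration is the extremal one—equivalently, that a very large star-like class is self-defeating, since its centre then has large degree and forces many triangles beyond those recorded by $\frac{m(4m-n^2)}{3n}$. Handling this convexity and realisability issue, rather than the counting itself, is where the real work lies; since the constants land exactly on the equality case of the uniform configuration (matching the circulant example that makes Conjecture~\ref{aharoniconj} sharp at $r=3$), there is essentially no slack to spare, and near the threshold one must exploit the common-neighbour estimate directly rather than through its Cauchy--Schwarz relaxation.
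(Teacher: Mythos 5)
You have assembled exactly the two ingredients the paper itself uses---the cherry count $t(G)\le\sum_i\binom{|C_i|}{2}$, which is the first step of the paper's Lemma \ref{triangobs}, and Goodman's bound $t(G)\ge\frac{4m}{3n}\left(m-\frac{n^2}{4}\right)$ of Theorem \ref{goodman}, which you reprove via common neighbourhoods and Cauchy--Schwarz---but your argument has a genuine gap at precisely the point you flag and then leave open: the hypotheses of Theorem \ref{chapprox} give only \emph{lower} bounds on the class sizes $|C_i|$, while the cherry count needs an \emph{upper} bound, and nothing in your write-up supplies one. Feeding the per-class lower bound $|C_i|\ge\frac{n}{3}$ (resp. $\frac{2n}{5}$) into $\sum_i\binom{|C_i|}{2}$ is illegitimate, since that sum grows when classes are large, exactly as you observe. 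Moreover, the remedies you gesture at---showing a large star-like class is ``self-defeating'' via extra triangles, or using the common-neighbour estimate directly instead of its Cauchy--Schwarz relaxation---are not carried out and point in the wrong direction: no convexity or extremal analysis is needed at all.

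The missing step is a one-line minimality argument. Deleting an edge preserves rainbow-triangle-freeness and preserves the hypotheses as long as every colour class keeps size at least $k$, where $k=\lceil\frac{n}{3}\rceil$ in case (1) and $k=\lceil\frac{2n}{5}\rceil$ in case (2); hence an edge-minimal counterexample has every class of size \emph{exactly} $k$. Then $\sum_i\binom{|C_i|}{2}=p\binom{k}{2}=\frac{1}{2}m(k-1)$ with $m=pk$, your combined inequality becomes $3n(k-1)\ge 8m-2n^2$, and with $m\ge\frac{9}{8}nk$ (resp. $m\ge nk$) this fails because $k\ge\frac{n}{3}$ (resp. $k\ge\frac{2n}{5}$). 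This is the paper's proof verbatim. Note also that your closing claim that ``there is essentially no slack to spare'' is inaccurate: the slack is the $-1$ in $k-1$ (each class of size $k$ contributes $\binom{k}{2}$, not $\frac{k^2}{2}$), and this lower-order term is what yields the strict contradiction, so the Cauchy--Schwarz relaxation is perfectly adequate near the threshold.
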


To prove these statements, for every graph $G$ let $t(G)$ denote the number of triangles in $G$.

\begin{lemma}
\label{triangobs}
Let $G$ be an edge-coloured graph with $m$ edges without rainbow triangles.  If every colour class has size at most $k$, then $t(G) \le \frac{1}{2}m(k-1)$.
\end{lemma}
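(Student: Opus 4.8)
The plan is to bound the number of triangles by a double-counting argument that charges each triangle to a pair of equally coloured edges. Since $G$ contains no rainbow triangle, every triangle $T$ must have two of its three edges sharing a colour; I will call an unordered pair of distinct edges of the same colour a \emph{monochromatic pair}. Thus each triangle determines at least one monochromatic pair among its edges, and I would fix, once and for all, a choice of one such pair $\phi(T)$ for every triangle $T$.

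The key observation is that the resulting map $\phi$ is injective. Indeed, any two edges lying in a common triangle must share a vertex, so if $\phi(T) = \{e,f\}$ then $e$ and $f$ meet at some vertex $v$, say $e = vx$ and $f = vy$ with $x \neq y$. The only triangle containing both $e$ and $f$ is the one on vertex set $\{v,x,y\}$ (with third edge $xy$); hence $T$ is uniquely recovered from $\phi(T)$, and no two distinct triangles can be charged to the same monochromatic pair. Consequently $t(G)$ is at most the total number of monochromatic pairs in $G$.

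It then remains to count monochromatic pairs. Writing $c_i = |C_i|$ for the size of the $i$-th colour class, the number of monochromatic pairs is exactly $\sum_i \binom{c_i}{2}$. Since $c_i \le k$ for every $i$, I would bound $\binom{c_i}{2} = \tfrac{1}{2} c_i (c_i - 1) \le \tfrac{1}{2} c_i (k-1)$, and summing over all colours gives $\sum_i \binom{c_i}{2} \le \tfrac{1}{2}(k-1) \sum_i c_i = \tfrac{1}{2}(k-1) m$, because the colour classes partition $E$. Combining this with the previous paragraph yields $t(G) \le \tfrac{1}{2} m (k-1)$, as desired.

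The argument is short, and the only point that needs care is the injectivity of $\phi$: one must check that two edges can lie together in at most one triangle, which hinges on the graph being simple so that a pair of adjacent edges determines a unique third vertex. I do not expect any genuine obstacle beyond making this observation precise.
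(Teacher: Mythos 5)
Your proof is correct and is essentially the paper's argument in slightly different clothing: the paper charges each triangle to the pair of edges in its majority colour class, giving the same bound $t(G) \le \sum_i \binom{|C_i|}{2}$ that your injective map $\phi$ yields. The only difference is cosmetic — you bound $\binom{c_i}{2} \le \tfrac{1}{2}c_i(k-1)$ termwise, while the paper expands $\sum_i \binom{c_i}{2} = \tfrac{1}{2}\sum_i c_i^2 - \tfrac{1}{2}m$ and uses $\sum_i c_i^2 \le km$.
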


\begin{proof}

Let $\{C_1, \ldots, C_p\}$ be the colour partition of $G$.  For a triangle, there is a colour class containing the majority of its edges.  Every colour $i$ can be the majority for at most ${|C_i| \choose 2}$ triangles.  Thus
\[ t(G)
	\le  \sum_{i=1}^p {|C_i| \choose 2} 	
	= \tfrac{1}{2} \sum_{i=1}^{p} |C_i|^2 - \tfrac{1}{2}m	
	 \le \tfrac{1}{2} (\tfrac{m}{k} k^2) - \tfrac{1}{2}m	
	= \tfrac{1}{2}m(k-1). \]\end{proof}

The lemma bounds from above the number of triangles which can appear in an edge-coloured graph without rainbow triangles.  On the other hand, there is a rather straightforward lower bound on the number of triangles in a graph with $n$ vertices and $m$ edges, which follows from a nice application of the Cauchy-Schwarz inequality.

\begin{theorem}[Goodman \cite{go}, see also Nordhaus-Stewart \cite{ns}]
\label{goodman}
If $G$ is a graph with $n$ vertices and $m$ edges,
\[ t(G) \ge \frac{4m}{3n} \left( m - \frac{n^2}{4} \right). \]
\end{theorem}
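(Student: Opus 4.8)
The plan is to estimate $t(G)$ by double counting incidences between edges and their common neighbours, and then to invoke Cauchy--Schwarz on the degree sequence, exactly as the statement's attribution suggests. Writing $N(v)$ for the neighbourhood and $d(v)=|N(v)|$ for the degree of a vertex $v$, I would start from the identity
\[ 3\,t(G) = \sum_{uv \in E} |N(u) \cap N(v)|, \]
which holds because every triangle is counted once for each of its three edges, the corresponding summand recording its apex.

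First I would establish a clean lower bound on each summand. Since $N(u)$ and $N(v)$ are both subsets of the $n$-element vertex set, inclusion--exclusion gives $|N(u) \cap N(v)| = d(u) + d(v) - |N(u) \cup N(v)| \ge d(u) + d(v) - n$. Summing this over all edges yields
\[ 3\,t(G) \ge \sum_{uv \in E} \bigl( d(u) + d(v) \bigr) - mn. \]
The next step is the handshake-type identity $\sum_{uv \in E} (d(u)+d(v)) = \sum_{v \in V} d(v)^2$, valid because each vertex $v$ contributes the term $d(v)$ once for every one of the $d(v)$ edges incident to it.

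It then remains to bound $\sum_v d(v)^2$ from below. By Cauchy--Schwarz (equivalently, the power-mean inequality applied to the degrees),
\[ \sum_{v \in V} d(v)^2 \ge \frac{1}{n}\Bigl(\sum_{v \in V} d(v)\Bigr)^2 = \frac{(2m)^2}{n} = \frac{4m^2}{n}, \]
where $\sum_v d(v) = 2m$ is the handshake lemma. Combining the three displays gives $3\,t(G) \ge \frac{4m^2}{n} - mn = \frac{4m}{n}\bigl(m - \frac{n^2}{4}\bigr)$, which is the claimed inequality after dividing by $3$. I do not expect a genuine obstacle here; the only point requiring a little care is the per-edge bound $|N(u)\cap N(v)| \ge d(u)+d(v)-n$, since this is where the global constraint that only $n$ vertices are available enters, and where the right-hand side may go negative. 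This causes no harm: the left-hand side is always nonnegative, so the term-wise inequality still holds, and negative contributions to the sum only strengthen the estimate. Everything else is bookkeeping together with the single application of Cauchy--Schwarz.
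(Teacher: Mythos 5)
Your proof is correct and follows exactly the route the paper has in mind: the paper cites Goodman's theorem without proof but describes it as ``a nice application of the Cauchy--Schwarz inequality,'' and indeed the identical machinery --- the identity $\sum_{uv \in E}\bigl(d(u)+d(v)\bigr) = \sum_{v \in V} d(v)^2$, the bound via $|N(u)\cup N(v)| \le n$, and Cauchy--Schwarz giving $\sum_v d(v)^2 \ge \frac{4m^2}{n}$ --- appears almost verbatim inside the paper's proof of Lemma \ref{mainlemma}. Your attention to the point that $d(u)+d(v)-n$ may be negative is well placed and handled correctly; there is nothing to fix.
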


Note that this implies Mantel's Theorem \cite{m} -- the special case of Tur\'an's Theorem which asserts that an $n$-vertex graph with more than $\frac{n^2}{4}$ edges contains a triangle.  Goodman's Theorem was improved by Bollob\'as \cite{bol}, and recently Razborov \cite{r08} proved a theorem establishing the precise dependence between the edge density and triangle density.  (This result was then extended by Nikiforov \cite{n} to $K_4$ subgraphs and by Reiher \cite{r} for cliques of arbitrary size.)  However, for the purpose of our application, using Razborov's theorem instead of Goodman's provides no improvement.

\bigskip

\noindent{\it Proof of Theorem \ref{chapprox}: } Assume (for a contradiction) that $G$ is an edge-minimal graph which violates one of the conditions.  Then the size of each colour class is $k$, where $k = \lceil \frac{n}{3} \rceil$ in the first case and  $k = \lceil \frac{2n}{5} \rceil$ in the second.  Now, combining Lemma \ref{triangobs} and Theorem \ref{goodman} we obtain the inequality $3n(k-1) \ge 8m - 2n^2$. As $m \ge \frac{9}{8}nk$ in the first case and $m \ge nk$ in the second, we get a contradiction in both cases.
\quad\quad$\Box$

\bigskip
\section{Bounded-size colour classes }

For positive integers $n,k$ define $g(n,k)$ to be the largest integer $m$ so that there exists an edge-coloured graph with $n$ vertices and $m$ edges, so that all colour classes have size at most $k$ and there is no rainbow triangle.  If we knew that $g(n, \lceil \frac{n}{3} \rceil) < n \lceil \frac{n}{3} \rceil$, then the $r=3$ case of Conjecture \ref{aharoniconj} would follow. Thus, determining $g(n,k)$ for all $n$ and $k$ appears at least as difficult as  the $r=3$ case of the Cacceta-H\"aggkvist conjecture. But it is possible to calculate $g(n,k)$ for $n$ suitably larger than $k$. This is the main content of this paper.

We start by introducing a family of edge-coloured graphs that, as we shall later prove, achieve the optimal bound for $g(n,k)$ whenever $n$ is sufficiently large.
To define this family, let $k,a,b$ be positive integers. Consider a complete bipartite graph $K_{a,b}$ with bipartition $(A,B)$ where $|A| = a$ and $|B| = b$.  Form a new graph $H_{a,b}^k$ from this graph by adding $\lfloor \frac{b}{k} \rfloor$ disjoint copies of $K_{k}$ on the set $B$, and then if $r = b - k \lfloor \frac{b}{k} \rfloor > 0$ we add another clique of size $r$ to $B$, disjoint from the rest.  Next we equip this graph with an edge colouring.  Let $B_1, \ldots, B_{ \lceil \frac{b}{k} \rceil}$ be the cliques which were added to $B$, and for every $v \in A$ and $1 \le i \le \lceil \frac{b}{k} \rceil$ declare the set of edges between $v$ and $B_i$ to be a colour class. Then, for each $i$,  order the vertices of  $B_i$ as $u_1, \ldots, u_{k'}$ (where $k'=|B_i|$),  and for every $1 \le j \le k'-1$ declare the set of edges between $u_j$ and $\{u_{j+1}, \ldots, u_{k'}\}$ to be a colour class.  It is straightforward to verify that all colour classes have size at most $k$, and there is no rainbow triangle.

Remark: the colour classes which contain edges with both ends in $B$ have sizes ranging from $1$ up to $k-1$.  If one wishes to obtain a colouring where all colour classes have size equal to $k$, this can easily be arranged in the case when $2k$ divides $b$ by merging colour classes from this construction.

When $k=1$ all subgraphs are rainbow, so the Tur\'an graph $K_{ \lfloor \frac{n}{2} \rfloor, \lceil \frac{n}{2} \rceil}$ (which is
also isomorphic to $H^1_{ \lfloor \frac{n}{2} \rfloor, \lceil \frac{n}{2} \rceil }$) has the maximum number of edges without a rainbow triangle.  Thus
\[ g(n,1) =  \left\lfloor \frac{n}{2} \right\rfloor \left\lceil \frac{n}{2} \right\rceil. \]

Next consider the case  $k=2$.

\begin{theorem}\label{gn2}
The values of $g(n,2)$ are given by:
\begin{enumerate}
\item $g(5,2)=8$,
\item $g(n,2)=\left\lfloor \frac{n}{2} \right\rfloor \left\lceil \frac{n}{2} \right\rceil  + \left\lfloor \frac{1}{2} \left\lceil \frac{n}{2} \right\rceil \right\rfloor$ for $n \ne 5$.

\end{enumerate}
\end{theorem}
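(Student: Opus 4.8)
The plan is to prove matching lower and upper bounds, with $n=5$ emerging as a genuine exception. Write $h(n)=\lfloor n/2\rfloor\lceil n/2\rceil+\lfloor\tfrac12\lceil n/2\rceil\rfloor$ for the claimed value. For the lower bound I would simply exhibit the relevant construction: the edge-coloured graph $H^2_{\lfloor n/2\rfloor,\lceil n/2\rceil}$ has exactly $\lfloor n/2\rfloor\lceil n/2\rceil+\lfloor\tfrac12\lceil n/2\rceil\rfloor$ edges, all colour classes of size at most $2$, and no rainbow triangle, giving $g(n,2)\ge h(n)$. A short check shows that among all $H^2_{a,b}$ with $a+b=n$ the balanced split is optimal, with the matching placed in the larger side (increasing $b$ beyond $\lceil n/2\rceil$ loses more bipartite edges than it gains in the clique part), so fixing this bipartition costs nothing. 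For $n=5$ the construction yields only $7$ edges, so here I would instead display a specific $8$-edge graph on five vertices (namely $K_5$ with a matching of two edges deleted) together with an explicit colouring into classes of size at most $2$ with no rainbow triangle, establishing $g(5,2)\ge 8$.

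For the upper bound the engine is Lemma \ref{triangobs}. Let $G$ be any edge-coloured graph with $m$ edges, colour classes of size at most $2$, and no rainbow triangle; Lemma \ref{triangobs} with $k=2$ gives $t(G)\le\tfrac12 m$. If $m\le\lfloor n^2/4\rfloor$ we are done, since $h(n)\ge\lfloor n^2/4\rfloor$, so I may assume $m=\lfloor n^2/4\rfloor+s$ with $s\ge 1$ and must bound $s$. The point is that Goodman's Theorem \ref{goodman} alone does not suffice: near the Mantel threshold it only guarantees about $\tfrac{n}{3}s$ triangles, which combined with $t(G)\le\tfrac12 m$ yields $s\lesssim\tfrac{3n}{8}$ rather than the required $s\lesssim\tfrac{n}{4}$. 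The discrepancy is exactly the gap between the supersaturation slopes $\tfrac{n}{3}$ and $\tfrac{n}{2}$.

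Thus the crux is a sharp supersaturation estimate: a graph on $n$ vertices with $\lfloor n^2/4\rfloor+s$ edges contains at least $s\lfloor n/2\rfloor$ triangles in the relevant range of $s$, the correct slope $\lfloor n/2\rfloor$ being attained by a complete bipartite graph with a matching added inside its larger part. This is the classical Erd\H{o}s / Lov\'asz--Simonovits phenomenon, but I would prove the exact inequality directly since I need it to be tight at the boundary. Feeding $t(G)\ge s\lfloor n/2\rfloor$ into $t(G)\le\tfrac12 m=\tfrac12(\lfloor n^2/4\rfloor+s)$ gives $s(2\lfloor n/2\rfloor-1)\le\lfloor n^2/4\rfloor$, and solving this integer inequality yields $s\le\lfloor\tfrac12\lceil n/2\rceil\rfloor$ for every $n\ne 5$, hence $m\le h(n)$. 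The three parity regimes ($n$ even, $n\equiv1$ and $n\equiv3\pmod 4$) must each be matched against the floors defining $h(n)$, but they all drop out of this single inequality.

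The main obstacle is twofold. First, establishing the supersaturation bound with the exact constant $\lfloor n/2\rfloor$ throughout the range $1\le s\le\lfloor\tfrac12\lceil n/2\rceil\rfloor+1$ (one step beyond the target, so as to exclude the next value of $s$); the linear regime is well understood, but pinning the constant and the boundary cases takes care. Second, the small-$n$ behaviour: the very same computation gives only $s\le 2$ when $n=5$, and since $h(5)$ corresponds to $s=1$ this leaves exactly the room for the exceptional value. Here I would argue separately that $m=9$ is impossible (again from $t(G)\ge 3\lfloor 5/2\rfloor=6>\tfrac12\cdot 9$) while $m=8$ is realizable, the latter by checking that the four triangles of the $8$-edge graph admit four edge-disjoint monochromatic ``corners'' using all eight edges. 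I would therefore isolate $n=5$, dispatch the tiny cases $n\le 4$ directly, and run the supersaturation argument only for $n\ge 6$.
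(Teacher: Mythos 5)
Your plan is correct and runs on the same engine as the paper's proof: the lower bound via $H^2_{\lfloor n/2\rfloor,\lceil n/2\rceil}$, the exceptional $5$-vertex example (your $K_5$ minus a $2$-matching is exactly the paper's $5$-wheel with colour classes $C_i=\{v_iv_{i+1},v_iz\}$), and the upper bound by combining Lemma \ref{triangobs} (giving $t(G)\le \tfrac12 m$) with the sharp supersaturation bound $t(G)\ge s\lfloor n/2\rfloor$; your integer inequality $s(2\lfloor n/2\rfloor-1)\le\lfloor n^2/4\rfloor$ and its case analysis check out, and your isolation of $n=5$ (where the computation only gives $s\le2$, with $m=9$ excluded because the unique such graph $K_5-e$ has $7>9/2$ triangles) is sound. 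The genuine difference is how the supersaturation bound is obtained and where it is applied. The paper cites Lov\'asz--Simonovits (Theorem \ref{lovaszsimonovitz}), which is stated only for $s<\lfloor n/2\rfloor$; for larger $s$ it deletes edges down to $s=\lfloor n/2\rfloor-1$, which gives a contradiction for all $n\ge6$ \emph{except} $n=7$ -- precisely because for $n=7$ the largest in-range value $s=2$ equals the target $\lfloor\tfrac12\lceil n/2\rceil\rfloor$ -- and it kills the residual case $(n,m)=(7,15)$ by deleting a vertex of degree at most $4$ and invoking the already-proved $n=6$ bound. Your plan instead needs $t\ge s\lfloor n/2\rfloor$ at $s=\lfloor\tfrac12\lceil n/2\rceil\rfloor+1$, which for $n=7$ means $s=3=\lfloor 7/2\rfloor$, strictly outside the cited theorem's range. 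The inequality is in fact true there (every $7$-vertex, $15$-edge graph has at least $9$ triangles, which one can verify through the complement, a graph with only $6$ edges), but this boundary case is exactly what the citation does not cover, and ``proving the exact inequality directly'' for all $n$ amounts to reproving Lov\'asz--Simonovits -- the heaviest step of your plan and the one you leave unexecuted. What your route buys, if carried out, is a self-contained proof with no special $n=7$ detour; what the paper's route buys is that the stability theorem can be used as a black box, at the cost of the two small deletion tricks. If you prefer not to reprove the supersaturation bound, citing Theorem \ref{lovaszsimonovitz} and adopting the paper's edge-deletion and vertex-deletion steps closes your argument as written.
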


\begin{proof}

To see that
 $g(5,2) \ge 8$ let $G$ be the $5$-wheel with center $z$ and cycle vertices $v_1, v_2, v_3, v_4$, and let the colour classes be $C_i=\{v_iv_{i+1},v_iz\} ~~(1 \le i \le 4)$, where $v_5:=v_1$. To see that $g(n,2)\ge  \left\lfloor \frac{n}{2} \right\rfloor \left\lceil \frac{n}{2} \right\rceil + \left\lfloor \frac{1}{2} \left\lceil \frac{n}{2} \right\rceil \right\rfloor$, notice that
 the graph $H^2_{ \lfloor \frac{n}{2} \rfloor, \lceil \frac{n}{2} \rceil }$ has $n$ vertices and $ \left\lfloor \frac{n}{2} \right\rfloor \left\lceil \frac{n}{2} \right\rceil + \left\lfloor \frac{1}{2} \left\lceil \frac{n}{2} \right\rceil \right\rfloor$ edges,  setting this as a lower bound on $g(n,2)$.

 For an upper bound, we shall use the following stability version of Tur\'an's Theorem.

\begin{theorem}[Lov\'asz-Simonovits \cite{ls}]\label{lovaszsimonovitz}
If $G$ is a graph with $n$ vertices and $\lfloor \frac{n}{2} \rfloor \lceil \frac{n}{2} \rceil + s$ edges, where $s < \lfloor \frac{n}{2} \rfloor$,  then $t(G) \ge s \lfloor \frac{n}{2} \rfloor$.
\end{theorem}

The upper bound can easily be checked for $n \le 5$. We henceforth assume that $G$ is a graph with $n \ge 6$ vertices and $m$ edges, with edges coloured so that the colour classes are of size at most $2$, and $G$ has no rainbow triangles.
Let $s=m - \lfloor \frac{n}{2} \rfloor \lceil \frac{n}{2} \rceil$.
  We have to show that $s \le \frac{1}{2} \lceil \frac{n}{2} \rceil$.

  Assume first that $s <  \lfloor \frac{n}{2} \rfloor$. In this case, combining Lemma \ref{triangobs} (for  $k=2$) with  Theorem \ref{lovaszsimonovitz} gives
  $$s \lfloor \frac{n}{2}\rfloor \le \frac{m}{2}.$$

\noindent Replacing $m$ by $\lfloor \frac{n}{2} \rfloor \lceil \frac{n}{2} \rceil +s$ and multiplying by $\frac{2}{\lfloor \frac{n}{2} \rfloor}$ yields
 $$2s \le \lceil \frac{n}{2} \rceil + \frac{s}{\lfloor \frac{n}{2} \rfloor}.$$

\noindent Since $\frac{s}{\lfloor \frac{n}{2} \rfloor}$ is less than $1$, it may be removed from the right-hand side, and we obtain $s \le \frac{1}{2} \lceil \frac{n}{2} \rceil$ as desired.

 If $s\ge \lfloor \frac{n}{2} \rfloor$, we can remove edges from the graph until the number of edges is $\lfloor \frac{n}{2} \rfloor \lceil \frac{n}{2} \rceil + \lfloor \frac{n}{2} \rfloor -1$. By the above argument, the number of edges in the new graph does not exceed $\lfloor \frac{n}{2} \rfloor \lceil \frac{n}{2} \rceil + \frac{1}{2}\lceil \frac{n}{2} \rceil$. For all values of $n \ge 6$ except $n=7$, this is a contradiction because $\lfloor \frac{n}{2} \rfloor -1 > \frac{1}{2} \lceil \frac{n}{2} \rceil$. It remains to rule out the case when $G$ has $7$ vertices and $15$ edges. In this case, the average degree in $G$ is less than $5$. By removing a vertex of degree at most $4$, we are left with a graph on $6$ vertices having at least $11$ edges, contradicting the upper bound for $n=6$.\end{proof}

For $k>2$ we do not know the value of $g(n,k)$ for general $n$. But the following theorem gives the answer for large enough $n$.

\begin{theorem}
\label{main}
Let $n_1(k)=6k^5(k+1)^2$ and $n_2(k)=(n_1(k))^2$, and assume that $n \ge n_2(k)$. Then there exists $a$ so that  $H^k_{a, n-a}$ has the maximum number of edges over all $n$-vertex edge-coloured graphs for which all colour classes have size at most $k$ and there is no rainbow triangle. Moreover, any graph attaining this maximum is isomorphic to $H^k_{a,n-a}$ for some $a$.
\end{theorem}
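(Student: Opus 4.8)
The plan is to prove a stability-plus-exact-structure statement: first show that any graph $G$ attaining $g(n,k)$ has, up to a negligible error, the complete bipartite shape forced by Mantel's theorem; then use the colour constraints to pin the internal edges down to the clique decomposition of $H^k$; and finally reduce the upper bound to a clean arithmetic optimisation whose equality case yields the isomorphism. First I would bound the edge count $m$ of such a $G$. Since $H^k_{a,n-a}$ is admissible, $m$ is at least the edge count of $H^k_{a,n-a}$, which exceeds $\lfloor n/2\rfloor\lceil n/2\rceil$ by $\Theta(nk)$. Substituting $t(G)\le\frac12 m(k-1)$ from Lemma~\ref{triangobs} into Goodman's bound (Theorem~\ref{goodman}) and cancelling $m$ yields $m\le\frac{n^2}{4}+\frac38 n(k-1)$, so $m$ overshoots the Mantel threshold by only $O(nk)$ and $t(G)=O(n^2k)$, which is negligible against $n^3$. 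A stability companion to Theorem~\ref{lovaszsimonovitz} (each internal edge beyond a bipartition costing about $\tfrac n2$ triangles) then produces a partition $V(G)=A\sqcup B$ with $e_A+e_B=O(t(G)/n)=O(nk)$, forcing the parts to be near-balanced and all but $O(nk)$ of the $|A||B|$ cross pairs to be edges. The threshold $n\ge n_2(k)$ is what makes all of these error terms negligible against the main terms.

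The heart of the argument is to convert the colour constraints into the exact clique structure. The key observation is that if $uu'$ is an edge with both ends in $B$ and colour $c$, then for every $v\in A$ joined to both $u$ and $u'$ the triangle $vuu'$ is non-rainbow, so either $vu$ and $vu'$ agree in colour, or one of them lies in the class $C_c$; as $|C_c|\le k$, the latter happens for at most $k-1$ vertices $v$. Writing $u\sim u'$ when $vu$ and $vu'$ share a colour for all but at most $k-1$ of the $v\in A$, every $B$-internal edge therefore joins $\sim$-equivalent vertices. This relation is transitive up to a bounded defect, and no class can have more than $k$ elements, since more than $k$ pairwise-equivalent vertices would force $k+1$ monochromatic edges at some $v$. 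Hence the $B$-internal edges lie inside cliques $B_1,\ldots,B_t$ of size at most $k$, exactly as in $H^k$, and the symmetric argument partitions $A$ into classes of size at most $k$.

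Next I would rule out substantial cliques on both sides. If $A_j$ is an $A$-class and $B_i$ a $B$-class, both cliques with all cross edges present, then chaining the two consistency relations — $vu$ equals $vu'$ for $u,u'\in B_i$, and $vu$ equals $v'u$ for $v,v'\in A_j$ — forces all $|A_j||B_i|$ cross edges into a single colour class, so $|A_j|\,|B_i|\le k$. With this product constraint the upper bound becomes the arithmetic problem of maximising $|A||B|+\sum_j\binom{|A_j|}{2}+\sum_i\binom{|B_i|}{2}$ subject to every class size being at most $k$ and every $A$-class/$B$-class product being at most $k$; the optimum concentrates all cliques of size $k$ in the larger part, leaves the other part independent, and takes the split near $a=\frac n2-\frac{k-1}{4}$, which is precisely $H^k_{a,n-a}$. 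Tracking equality throughout forces the bipartite graph to be complete, the internal edges to form full cliques of the stated sizes, and $A$ to be independent, which gives the isomorphism in the ``moreover'' clause.

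The main obstacle is the passage from ``almost all $v$'' to exact statements in the last two steps: the relation $\sim$ is only approximately transitive, its classes only approximately uniform, and the product constraint $|A_j||B_i|\le k$ is clean only for class pairs avoiding the boundedly many exceptional vertices. Converting these defects into an exact clique partition, while simultaneously establishing completeness of the cross edges and independence of $A$, demands careful control of how many vertices and classes can misbehave; since at most about $n_1(k)$ vertices can be exceptional, one needs $n\ge n_1(k)^2=n_2(k)$ for the exceptional set to be a negligible fraction and for the equality analysis to close. This two-phase shape — approximate structure already for $n\ge n_1(k)$, then bootstrapped to the exact result for $n\ge n_2(k)$ — is, I expect, exactly where the stated bound on $n$ is consumed.
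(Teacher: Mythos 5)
Your outline diverges from the paper's route, but the decisive problem is a genuine gap at what you yourself call the heart of the argument. The claim that a fully joined pair of cliques $A_j,B_i$ forces all $|A_j|\,|B_i|$ cross edges into a single colour class, hence the product constraint $|A_j|\,|B_i|\le k$, is false. Take $A_j=\{v,v'\}$, $B_i=\{u,u'\}$, all six edges present, and colour $\{uu',vu,vu'\}$ with colour $1$ and $\{v'u,v'u',vv'\}$ with colour $2$: every triangle of this $K_4$ repeats a colour, both classes have size $3$, so for $k=3$ this is admissible, yet the cross edges use two colours and $|A_j|\,|B_i|=4>k$. What is true --- and what the paper uses, via Gallai's theorem in the form of Lemma \ref{completecoloured} --- is an \emph{additive} constraint: an edge-coloured complete graph without rainbow triangles has a colour class of size at least one less than its number of vertices, so a full join forces $|A_j|+|B_i|-1\le k$, not $|A_j|\,|B_i|\le k$. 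Your chaining argument cannot rescue monochromaticity: the relation $\sim$ holds only outside exceptional sets of size $O(k)$ per pair, and since the classes themselves have size at most $k$, a specific $v\in A_j$ may lie in the exceptional set of every pair $u,u'\in B_i$; agreement of colours at a generic witness $w\in A$ says nothing about the colours of the edges at $v$. With the product constraint replaced by the correct sum constraint, your closing optimisation is no longer the one you solve, and the extremal analysis has to be redone along the paper's lines (counting at least one missing edge between every $K_k$ in one side and every nontrivial component of the other, and trading these against the internal edges gained).

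Two further points where your proposal leans on unsupplied ingredients. First, the ``stability companion to Theorem \ref{lovaszsimonovitz}'' (each internal edge beyond a bipartition costing about $\frac{n}{2}$ triangles, yielding a partition with $O(nk)$ internal edges) is not the quoted theorem and is asserted without proof; the paper avoids any such black box by taking a maximum cut and bounding internal edges through the purely colour-theoretic claim that $e(W)\le(k-1)|W|$ whenever $W$ lies in a vertex's neighbourhood. Second, you never clean up low-degree vertices, yet all your ``for all but $O(k)$ of the $v\in A$'' steps require every pair in $B$ to have many common neighbours across the cut, which can fail at vertices of small degree; the paper secures this with a minimum-degree hypothesis in Lemma \ref{mainlemma}, obtained by iteratively deleting vertices of degree below half the current order (Lemma \ref{sqrt}). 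That deletion process can shrink the graph to as few as $\sqrt{n}$ vertices, and this --- not the fraction of exceptional vertices, as you conjecture --- is where $n_2(k)=n_1(k)^2$ is actually consumed.
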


This theorem is proved in the next section. Roughly speaking, it asserts that $$g(n,k) \approx \frac{n^2}{4} + \frac{(k-1)n}{4}$$ when $n$ is large.  In order to determine the exact value of $g(n,k)$ for all large enough $n$, we need to find the optimal choice of $a$ within the family of graphs $H^k_{a,n-a}$. This is a routine maximization problem, whose solution is stated without proof in the following proposition. It turns out that the answer depends on the remainder  $n$ leaves upon division by $2k$.

\begin{proposition}\label{best}
Let $n=t(2k) +r$, where $1 \le r \le 2k$.

\begin{enumerate}
\item If $1\le r \le k$ then the maximum of $|E(H^k_{a,b})|$ under the constraint $a+b=n$ is attained (solely) in the following two cases: $a=tk,~b=tk+r$, and $a=tk+1,~b=tk+r-1$. The value of the maximum is $\frac{n^2}{4} + \frac{(k-1)n}{4} - \frac{r(k+1-r)}{4}$.

\item If $k+1\le r \le 2k$ then the maximum of $|E(H^k_{a,b})|$ under the constraint $a+b=n$ is attained (solely) when  $a=(t-1)k+r,~b=(t+1)k$. The value of the maximum is $\frac{n^2}{4} + \frac{(k-1)n}{4} + \frac{(r-k-1)(2k-r)}{4}$.
\end{enumerate}
\end{proposition}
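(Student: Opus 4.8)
The plan is to convert the statement into a one-variable discrete optimization and then dispatch a finite case analysis. First I would record the exact edge count. By construction $H^k_{a,b}$ consists of the $ab$ edges of $K_{a,b}$ together with the edges of the cliques planted on $B$: namely $\lfloor b/k\rfloor$ copies of $K_k$ and one clique on the $\rho := b - k\lfloor b/k\rfloor = b \bmod k$ leftover vertices. Summing $\binom{k}{2}$ over the full cliques and $\binom{\rho}{2}$ for the remainder, and substituting $\lfloor b/k\rfloor = (b-\rho)/k$, a one-line simplification gives
\[ |E(H^k_{a,b})| = ab + \tfrac{1}{2}\bigl((k-1)b - \rho(k-\rho)\bigr), \qquad \rho = b \bmod k. \]
Setting $a = n-b$ and splitting off the part depending only on the residue, this reads $|E(H^k_{a,b})| = \psi(b) - P(b \bmod k)$, where $\psi(b) := (n-b)b + \tfrac{1}{2}(k-1)b$ is a downward parabola with vertex $b^\ast = \tfrac{n}{2} + \tfrac{k-1}{4}$, and $P(\rho) := \tfrac{1}{2}\rho(k-\rho)\ge 0$ is a ``tent'' penalty vanishing exactly when $k \mid b$.

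Next I would normalize to expose the modulus $2k$. Writing $n = 2tk + r$ and $b = tk + j$, we have $b \bmod k = j \bmod k$, and completing the square gives $\psi(b) = \psi(b^\ast) - (j-c)^2$ with $c := b^\ast - tk = \tfrac{2r+k-1}{4}$. Hence maximizing the edge count is equivalent to minimizing $g(j) := (j-c)^2 + P(j \bmod k)$ over integers $j$. Since $0 \le P(\rho) \le k^2/8$ while $(j-c)^2$ grows quadratically, any minimizer lies within $O(k)$ of $c$, so only finitely many $j$ need be examined; the constraints $a,b \ge 1$ are vacuous because $n \ge n_2(k)$ forces $b$ near $\tfrac{n}{2}$. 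The phase $c$ of the parabola relative to the residue lattice $k\mathbb{Z}$ is governed precisely by $r = n \bmod 2k$, which is why the answer organizes itself according to $r$.

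Finally I would run the case analysis. On each residue class the penalty is constant, so the best $j$ in that class is the integer nearest $c$, and one compares the $O(1)$ relevant classes via the telescoping identity $g(j+1) - g(j) = (2j+1-2c) + \bigl(P((j+1)\bmod k) - P(j \bmod k)\bigr)$. For $1 \le r \le k$ one finds the minimum attained exactly at $j = r-1$ and $j = r$, i.e.\ $(a,b) = (tk+1,\,tk+r-1)$ and $(tk,\,tk+r)$; for $k+1 \le r \le 2k$ the vertex has shifted far enough that the zero-penalty lattice point $j = k$ strictly beats every competitor in the window, giving the unique optimum $(a,b) = ((t-1)k+r,\,(t+1)k)$. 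Substituting these values into the edge formula yields the two stated closed forms. I expect the only real work to be the two ``solely'' claims: the exact tie $g(r-1) = g(r)$ (which drops out of the telescoping identity, the two increments $\tfrac{2r-k-1}{2}$ and $\tfrac{k+1-2r}{2}$ cancelling) together with strict unimodality away from the pair in the first case, and the strict inequalities establishing that $j=k$ wins uniquely in the second case. These are elementary but must be verified uniformly in $k$ across the whole range of $r$, and the comparisons near $r=k$ and $r=2k$, where the quadratic gain is of the same order as a unit of penalty, are the tightest point of the argument.
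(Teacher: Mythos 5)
Your proposal is correct, but there is nothing in the paper to compare it against: the authors explicitly decline to prove Proposition \ref{best}, calling it ``a routine maximization problem, whose solution is stated without proof.'' Your argument is a complete and correct execution of that routine. I checked the key identities: the edge count $|E(H^k_{a,b})| = ab + \tfrac12\bigl((k-1)b - \rho(k-\rho)\bigr)$ with $\rho = b \bmod k$ is right; with $b = tk+j$ the increment simplifies to $g(j+1)-g(j) = 2j+1-r-(j \bmod k)$, which for $1 \le r \le k$ is negative for $j \le r-2$, zero exactly at $j = r-1$, and positive for $j \ge r$ (in all residue windows), giving precisely the tied pair $j \in \{r-1, r\}$, i.e.\ $(a,b) = (tk+1, tk+r-1)$ and $(tk, tk+r)$; for $k+1 \le r \le 2k$ it is negative for all $j \le k-1$ (since $j+1-r \le -1$) and positive for all $j \ge k$ (since $2k+1-r \ge 1$), giving the unique minimizer $j=k$, i.e.\ $(a,b) = ((t-1)k+r, (t+1)k)$. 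The closed forms follow from $\psi(b^\ast) = \bigl(\tfrac{n}{2}+\tfrac{k-1}{4}\bigr)^2$ together with the factorizations $(k-1)^2 - (2r-k+1)^2 = 4r(k-r-1)$ and $(k-1)^2-(3k+1-2r)^2 = 4(r-k-1)(2k-r)$; both agree with the stated values, and they reproduce the paper's spot checks ($7$ for $n=5$, $k=2$; $3k^2-3k+1$ for $t=1$, $r=k-1$). One small blemish: your appeal to $n \ge n_2(k)$ to dismiss the constraints $a,b \ge 1$ imports a hypothesis the proposition does not carry --- the paper itself invokes it at $t=1$ (Example \ref{example35}) --- but your increment analysis in fact needs only $t \ge 1$ for the optimal $(a,b)$ to be feasible, so this is a matter of phrasing rather than a gap.
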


Theorem \ref{main} shows that if $n$ is large enough with respect to $k$, then $g(n,k)$
is attained at some graph $H^k_{a,b}$. This is not  true for all values of $n$. We have already noticed it in the case $n=5,~k=2$ (see Theorem~\ref{gn2}), where an example was given demonstrating $g(5,2) \ge 8$, while the optimal value of $|E(H^2_{a,b})|$ is $7$. This example can be generalized, as follows.

\begin{example}\label{example35}
Let $k \ge 2$ and $n=3k-1$. Take a cycle $v_1v_2\ldots v_{3k-2}$, add a special vertex $z$, and connect every $v_i$ to all vertices $v_j$ at distance at most $k-1$ from it on the cycle, and to $z$. The $3k-2$ colour classes are the sets $C_i = \{v_iv_{i+j} \mid 1 \le j \le k-1\} \cup \{v_iz\}$ for all $1 \le i \le 3k-2$, where counting is modulo $3k-2$.
\end{example}

It is easy to check that there are no rainbow triangles. The number of edges is $(3k-2)k=3k^2-2k$, while by Proposition \ref{best}
(applied with $t=1,~r=k-1$)
the optimal value of $|E(H^k_{a,b})|$ is
$3k^2-3k+1$.

\section{Proof of Theorem \ref{main}}

Here is a pleasing fact about complete edge-coloured graphs:

\begin{lemma}\label{completecoloured}
\label{nokn}
Every edge-coloured $K_n$ without rainbow triangles has a colour class of size at least $n-1$.
\end{lemma}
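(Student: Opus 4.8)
The plan is to prove, by induction on $n$, the slightly stronger statement that \emph{some colour class, regarded as a subgraph of $K_n$, is spanning and connected}. Since a connected spanning subgraph on $n$ vertices has at least $n-1$ edges, this stronger statement immediately yields the lemma. The base cases $n \le 2$ are immediate: for $K_2$ the unique edge forms a spanning connected colour class. The point of strengthening the hypothesis is that I will want to re-attach a deleted vertex, and for that I need to know not just that some class is large but that it connects everything.

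For the inductive step I would fix $n \ge 3$, delete an arbitrary vertex $v$, and apply the inductive hypothesis to the (still rainbow-triangle-free) complete graph $K_n - v$. This yields a colour, say colour $1$, whose class spans and connects $V \setminus \{v\}$. If some edge incident with $v$ also has colour $1$, then attaching $v$ through that edge keeps colour $1$ both spanning and connected on all of $K_n$, and we are done. Otherwise no edge at $v$ has colour $1$, and here the key observation enters: for every edge $uw$ of colour $1$ (such edges exist, and indeed connect $V \setminus \{v\}$), the triangle $vuw$ has its edge $uw$ coloured $1$ while $vu$ and $vw$ are not; to avoid a rainbow triangle, $vu$ and $vw$ must then receive the same colour. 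Because colour $1$ is connected on $V \setminus \{v\}$, this equality propagates along colour-$1$ paths, forcing \emph{all} $n-1$ edges incident with $v$ to carry one common colour. That colour class is a spanning star centred at $v$, hence spanning and connected, and the induction is complete.

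The step I expect to carry the real weight is the propagation argument in the second case: it is precisely the \emph{connectivity} of the inherited colour class, not merely its size, that allows a constraint on adjacent pairs to spread to a global conclusion, which is exactly why strengthening the inductive hypothesis from ``has at least $n-1$ edges'' to ``spanning and connected'' is essential rather than cosmetic. An alternative route would invoke Gallai's structure theorem for rainbow-triangle-free colourings: the reduced colouring on the parts $V_1,\ldots,V_t$ of a Gallai partition uses at most two colours, one of which spans a connected subgraph of the reduced graph, and pulling this colour back to $K_n$ together with the weighted-tree estimate $\sum_{ij}|V_i||V_j| \ge n-1$ gives the bound. I would nonetheless favour the self-contained induction above, since it avoids citing the full structure theorem and isolates the one triangle-based idea that makes the lemma work.
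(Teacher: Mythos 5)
Your proof is correct, but it takes a genuinely different route from the paper. The paper applies Gallai's partition theorem directly: it takes a nontrivial partition $\mathcal{P}$ of the vertex set in which each cross-pair of blocks is monochromatic and at most two colours appear between blocks, then splits into cases --- if $|\mathcal{P}|=2$ the cut itself is a monochromatic set of size at least $n-1$; if $|\mathcal{P}|=3$ two of the three cuts share a colour, again giving a monochromatic cut; and if $|\mathcal{P}|\ge 4$ there are at least $2(n-1)$ cross edges in at most two colours, so one colour has at least $n-1$ of them. Your argument instead proceeds by induction on $n$ with the strengthened hypothesis that some colour class is \emph{spanning and connected}, and the key step --- when no edge at the deleted vertex $v$ carries the inherited colour $1$, each colour-$1$ edge $uw$ forces $c(vu)=c(vw)$, and connectivity of the colour-$1$ subgraph propagates this to make all of $\delta(v)$ monochromatic --- is sound; I checked that both cases of the reattachment preserve the strengthened property. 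Your approach buys self-containedness (no appeal to Gallai's structure theorem, which the paper uses as a black box) and in fact proves a strictly stronger conclusion, namely the existence of a monochromatic connected spanning subgraph, which is a known feature of Gallai colourings in its own right; the paper's approach is shorter given that Gallai's theorem is already being cited, at the cost of delivering only the edge-count bound. One cosmetic point: your base case $n=1$ is degenerate (there are no colour classes at all, though the bound $n-1=0$ is vacuous), but since your induction only ever descends to $n=2$, nothing is affected.
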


The lemma is an easy corollary of the following pretty theorem of Gallai.

\begin{theorem}[Gallai \cite{ga}, see Gy\'arf\'as-Simonyi \cite{gsi}]
If $G$ is an edge-coloured complete graph on at least two vertices without a rainbow  triangle, there is a nontrivial partition ${\mathcal P}$ of $V(G)$ satisfying:
\begin{enumerate}
\item If $P,Q \in {\mathcal P}$ satisfy $P \neq Q$, then all edges with one end in $P$ and the other in $Q$ have the same colour.
\item The set of edges with ends in distinct blocks of ${\mathcal P}$ has at most two colours.
\end{enumerate}
\end{theorem}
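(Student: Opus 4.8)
The plan is to establish the existence of the partition $\mathcal{P}$ by strong induction on $n$, the number of vertices. The base case $n=2$ is immediate: the two singletons form the required partition, since the unique edge trivially yields a one-coloured reduced graph. For $n \ge 3$ the goal is to locate a nontrivial partition that is \emph{monochromatic-between} -- meaning every pair of blocks has all connecting edges of one colour -- and whose reduced graph uses at most two colours; if the best such partition I can find still has a Gallai-coloured quotient on fewer vertices, I recurse.

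The engine consists of two reduction lemmas, both proved by the same \emph{no-rainbow propagation} trick. First, suppose some colour class $G_c$ (the spanning subgraph of colour-$c$ edges) is disconnected, with components $V_1,\dots,V_t$ (counting isolated vertices as singleton components), so $t \ge 2$. I claim $\{V_1,\dots,V_t\}$ is monochromatic-between: if $a,a'\in V_i$ are joined by a colour-$c$ edge and $b\in V_j$ with $j\ne i$, then $ab$ and $a'b$ are not coloured $c$, so the triangle $aa'b$ -- having one edge of colour $c$ -- forces $c(ab)=c(a'b)$; propagating along a colour-$c$ path inside $V_i$, and then symmetrically inside $V_j$, shows all $V_i$--$V_j$ edges share a single colour (necessarily $\ne c$). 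The quotient on $\{V_1,\dots,V_t\}$ is then Gallai-coloured on $t<n$ vertices (as colour $c$ occurs, two vertices lie in a common component, whence $t<n$), so induction supplies a Gallai partition of it, which pulls back to one of $G$. Second, and more usefully, fix two colours, red and blue, and let $\overline{H}$ be the set of all edges of the remaining colours; if $\overline{H}$ is disconnected with components $M_1,\dots,M_s$ ($s\ge2$), the identical argument (now a differing pair of cross-edges together with an $\overline{H}$-edge would form a rainbow triangle) shows $\{M_1,\dots,M_s\}$ is monochromatic-between with every between-colour in $\{\mathrm{red},\mathrm{blue}\}$ -- a Gallai partition outright, with no recursion needed. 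Together with the trivial observation that if at most two colours occur in all of $G$ the partition into singletons already works, these lemmas dispatch every configuration in which some colour class, or some two-colour complement, fails to be connected and spanning.

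The main obstacle is therefore the remaining case: every colour class $G_c$ is connected and spanning (so the first lemma never fires) and, for every pair of colours, the union of the other colours is also connected and spanning (so the second never fires), while at least three colours are present. The heart of the theorem is to show this situation is impossible -- equivalently, that a third colour can never be globally spanning but must be confined inside a block. My intended route is to fix a vertex $v$ and partition $V\setminus\{v\}$ into classes $N_1,\dots,N_q$ according to the colour of the edge to $v$; applying the no-rainbow condition to triangles with apex $v$ shows that every edge between two classes $N_i$ and $N_j$ carries one of the two endpoint colours $i$ or $j$. Since each colour class is spanning, all colours occur at $v$, so $q$ equals the total number of colours; one then tracks how a connected spanning colour, say a third colour $g$, can route between classes -- its cross-class edges are forced to touch the class $N_g$ -- and attempts to contradict the simultaneous spanning-connectivity of the other colours. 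I expect this confinement analysis to be the crux and by far the most delicate part, essentially as hard as the theorem itself; should a direct argument prove elusive, the fallback is the classical route of deleting $v$, applying induction to $G-v$ to obtain a two-coloured reduced structure, and reinserting $v$ through a case analysis that matches its edge-colours to the two base colours of that structure.
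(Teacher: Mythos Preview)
The paper does not prove this theorem; it is quoted as a known result of Gallai (with the reference to Gy\'arf\'as--Simonyi for a modern treatment) and then used as a black box to derive Lemma~\ref{completecoloured}. So there is no ``paper's own proof'' to compare against.

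That said, your proposal is not a complete proof. Your two reduction lemmas are correct and cleanly argued: if some colour class is disconnected you get a monochromatic-between partition whose quotient is a smaller Gallai-coloured clique, and if the complement of some pair of colours is disconnected you get a Gallai partition outright. The problem is the residual case you yourself isolate --- at least three colours, every colour class connected and spanning, and every two-colour complement connected --- which you do not dispose of. You describe an ``intended route'' via the colour-neighbourhoods $N_1,\dots,N_q$ of a fixed vertex and a ``fallback'' via induction on $G-v$, but you carry neither to a conclusion, and you explicitly concede that this step is ``essentially as hard as the theorem itself.'' That is an accurate assessment: this residual case is the substance of Gallai's theorem, and the reductions you have set up, while correct, do not by themselves crack it.

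If you want to complete the argument, the $G-v$ route is the more tractable one: apply the induction hypothesis to $G-v$ to obtain a Gallai partition $\mathcal{P}'$ with between-colours $\{a,b\}$, and then analyse how the edges from $v$ interact with the blocks of $\mathcal{P}'$, showing that either $v$ can be absorbed into an existing block or $\{v\}$ can be added as a new block while keeping the between-colours inside $\{a,b\}$ (possibly after merging some blocks of $\mathcal{P}'$). This case analysis is where the real work lies; until it is written out, the proof has a genuine gap.
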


\noindent{\it Proof of Lemma \ref{completecoloured}:} We may assume $n \ge 2$ and apply the above theorem to choose a partition ${\mathcal P}$. If this is a bipartition, then the edge-cut it defines is monochromatic and has size at least $n-1$. If $|{\mathcal P}| = 3$, then of the three cuts it defines two must be of the same colour, meaning that
the graph has a nontrivial monochromatic edge-cut (which must have size $\ge n-1$).  Otherwise $|{\mathcal P}| \ge 4$ and the number of edges between blocks of ${\mathcal P}$ must be at least $2(n-1)$ which again forces a colour class to have size $\ge n-1$ (to see this last claim, note that the number of edges between blocks is one half of the sum over all blocks $P \in {\mathcal P}$ of the number of edges with exactly one end in $P$).
\quad\quad$\Box$

\bigskip
For a graph $G$, let $\delta(G)$ denote the minimum degree in $G$. If $X$ is a set of vertices of $G$,  let $G[X]$ denote the graph induced on $X$,  let $E(X)$ denote the set of edges of $G$ with both ends in $X$, and  let $e(X) = |E(X)|$. Also, for $x \in X$ let $d_X(x)$ denote the degree of $x$ in $G[X]$. If $Y$ is a
set of vertices disjoint from $X$, then  let $E(X,Y)$ denote the set of edges with one end in $X$ and the other in $Y$, and  let $e(X,Y) = |E(X,Y)|$. Also, $\overline{e}(X,Y) = |X| \cdot |Y| - e(X,Y)$.

The main ingredient in the proof of Theorem \ref{main} is:

\begin{lemma}\label{mainlemma}
Let $G = (V,E)$ be an edge-coloured graph with the property that every colour class has size at most $k$ and there is no rainbow triangle.  If $n = |V|$ satisfies $n \ge n_1(k)$ (recall that $n_1(k)=6k^5(k+1)^2$) and  $m = |E|$ satisfies
\begin{equation}\label{massumption}
m \ge \frac{n^2}{4} + \frac{(k-1)n}{4} - \frac{k(k-1)}{2},\end{equation} and
\begin{equation}\label{deltaassumption}
\delta(G) \ge \lfloor \frac{n}{2} \rfloor,\end{equation}

\noindent then there exists a partition $\{X,Y\}$ of $V$ so that $X$ is an independent set and every component of $G[Y]$ has size at most $k$.
\end{lemma}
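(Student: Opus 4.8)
The plan is to prove the lemma in two stages: a \emph{stability} stage showing that $G$ is globally almost complete bipartite with nearly balanced parts, and an \emph{exact} stage that cleans the approximate partition into the one required. First I would pin down the parameters. Lemma~\ref{triangobs} gives $t(G)\le \tfrac12 m(k-1)$, while Goodman's Theorem~\ref{goodman} gives $t(G)\ge \frac{4m}{3n}(m-\tfrac{n^2}{4})$; comparing the two yields $m\le \tfrac{n^2}{4}+\tfrac{3(k-1)n}{8}$. Combined with (\ref{deltaassumption}), which forces $2m=\sum_v d(v)\ge n\lfloor \tfrac n2\rfloor$, the total degree excess $\sum_v(d(v)-\lfloor\tfrac n2\rfloor)$ is $O(kn)$, so all but $O(n/k)$ vertices have degree within $O(k^2)$ of $\tfrac n2$. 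I would also record, via Lemma~\ref{completecoloured}, that no clique of $G$ has more than $k+1$ vertices: a copy of $K_t$ has a monochromatic colour class of size $\ge t-1$, which must be $\le k$.

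Next I would choose the partition. Let $\{X,Y\}$ be a maximum cut, so every vertex sends at least half of its edges across. Then $e(X,Y)=m-\big(e(X)+e(Y)\big)\le |X|\,|Y|\le \tfrac{n^2}{4}$, which together with the lower bound (\ref{massumption}) already gives $e(X)+e(Y)\ge \tfrac{(k-1)n}{4}-\tfrac{k(k-1)}{2}$. For a matching upper bound I would count triangles: since $e(X,Y)\ge \tfrac{n^2}{4}-O(kn)$ forces $|X|,|Y|=\tfrac n2\pm O(\sqrt{kn})$, an interior edge whose endpoints have small degree excess has at least $\tfrac n2-O(\sqrt{kn})$ common neighbours on the opposite side, each producing a triangle; as every triangle contains between one and three interior edges, $t(G)\ge \tfrac13\big(e(X)+e(Y)\big)\big(\tfrac n2-O(\sqrt{kn})\big)$. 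Weighed against $t(G)\le \tfrac12 m(k-1)$, this bounds $e(X)+e(Y)=O(kn)$ and shows the interior edges are concentrated on a bounded number of low-degree-excess vertices.

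The \textbf{crux} is upgrading this approximate picture to the exact conclusion, and this is the step I expect to be the main obstacle. Here I would exploit the colour constraints locally. For an interior edge $yy'$, almost all of its $\ge \tfrac n2-O(\sqrt{kn})$ common neighbours $x$ on the opposite side yield a non-rainbow triangle $xyy'$; since the colour of $yy'$ is used at most $k$ times, all but at most $k$ of these force $xy$ and $xy'$ to receive a common colour. Propagating this identity along a connected interior subgraph, and using that each colour class has at most $k$ edges, a connected interior subgraph of size exceeding $k$ would produce either a rainbow triangle or an oversized colour class, a contradiction. The same common-neighbourhood analysis lets me relocate every endpoint of an interior edge to a single side $Y$: each such vertex retains degree $\ge\lfloor\tfrac n2\rfloor$, so the relevant common-neighbour counts survive the moves, leaving $X$ independent while keeping every component of $G[Y]$ of size at most $k$.

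I expect this consolidation-and-bounding argument to be both the technically heaviest part and the source of the large threshold $n_1(k)=6k^5(k+1)^2$: each local step needs the common neighbourhood of a pair to dominate simultaneously the $O(k)$ exceptional colours, the $O(\sqrt{kn})$ imbalance between the sides, and the $O(k^2)$ degree-excess slack, and these local steps must be iterated across the $O(n/k)$ interior components without accumulating error. The stability bookkeeping of the first two paragraphs is, by contrast, routine; it is the exact clean-up forcing $X$ to be \emph{fully} independent and $G[Y]$ to have \emph{all} components of size at most $k$ that carries the real content.
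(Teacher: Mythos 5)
There are two genuine gaps here, one in each stage. In the stability stage your argument is circular: you invoke ``$e(X,Y)\ge \tfrac{n^2}{4}-O(kn)$ forces $|X|,|Y|=\tfrac n2\pm O(\sqrt{kn})$'' in order to lower-bound common neighbourhoods of interior edges and thereby prove $e(X)+e(Y)=O(kn)$ by triangle counting --- but $e(X,Y)\ge\tfrac{n^2}{4}-O(kn)$ is exactly equivalent to the bound $e(X)+e(Y)=O(kn)$ you are trying to prove, and a maximum cut alone only guarantees $e(X,Y)\ge m/2$. Worse, ``small degree excess'' does not control interior degree: a vertex $x\in X$ with $d(x)=\lfloor\tfrac n2\rfloor$ and $d_X(x)\approx\tfrac n4$ is consistent with the max-cut condition $e(x,Y)\ge d_X(x)$, and then the common neighbourhood in $Y$ of an interior edge at $x$ can be empty, so your per-edge triangle count fails. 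Note there is no slack for lossy counting here: $t(G)\le\tfrac12 m(k-1)$ and the triangle count of the extremal graphs $H^k_{a,b}$ agree at leading order $\tfrac{(k-1)n^2}{8}$. The paper breaks the deadlock with a colour-based ingredient your sketch lacks: the claim that any $W\subseteq N(w)$ spans at most $(k-1)|W|$ edges, applied to the neighbourhoods of an edge $x_0y_0$ covering all but $\tfrac23(k-1)$ vertices (found via Cauchy--Schwarz plus Lemma \ref{triangobs}); this gives $m\le e(X_0,Y_0)+\tfrac53(k-1)n$ outright, hence $\overline{e}(X,Y)\le\tfrac32(k-1)n$ for the maximum cut, and then a separate colour argument (exceptional colours on a star of $7k$ interior neighbours) bounds the maximum interior degree by $7k$ --- \emph{before}, not after, any common-neighbourhood counting, and much more tightly than your $O(\sqrt{kn})$ balance.

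The decisive failure is in your exact stage. ``Relocating every endpoint of an interior edge to a single side $Y$'' cannot work: every vertex has degree at least $\lfloor\tfrac n2\rfloor$ and sends at least half of it across the cut, so moving any vertex from $X$ into $Y$ instantly creates a component of $G[Y]$ of size about $\tfrac n4$, destroying rather than preserving the bounded-component property. Your propagation idea is essentially the paper's argument that \emph{both} $G[X]$ and $G[Y]$ have components of size at most $k$ (a $(k{+}1)$-vertex tree has more than $k^2$ common neighbours across, one avoiding all $\le k^2$ colour conflicts, yielding a rainbow triangle or an oversized class) --- but that leaves the real crux untouched: showing the interior edges all live on one side. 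The paper settles this with an averaging dichotomy carrying genuine content: $a_X+a_Y\ge k-1-\tfrac{3k^2}{n}$ by maximizing a quadratic against \eqref{massumption}; then $a_X\le\tfrac1{k+1}$ via a count over pairs of spanning trees of large components on the two sides, each pair forcing a missing cross edge or a colour coincidence, which would push $\overline{e}(X,Y)$ above $\tfrac32(k-1)n$; and finally, since $a_Y\approx k-1$ forces at least $\tfrac{|Y|}{2k}$ components of $G[Y]$ to be copies of $K_k$, Lemma \ref{completecoloured} yields a missing edge between each such $K_k$ and each nontrivial component of $G[X]$ (else a $K_{k+2}$ would need a colour class of size $k+1$), and a final edge count against \eqref{massumption} forces $G[X]$ to have no nontrivial components at all. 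You state the clique bound from Lemma \ref{completecoloured} but never deploy it; without this dichotomy-and-count step nothing in your plan rules out, say, the small cliques being split between the two sides, and no relocation argument can repair that.
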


\begin{proof}

For $k=1$ the lemma follows from  Tur\'an's  Theorem, so we may assume that $k \ge 2$.  Let ${\mathcal T}$ be the set of triangles of $G$.  Now Lemma \ref{triangobs} together with a straightforward application of Cauchy-Schwarz yields
\begin{align*}
\sum_{uv \in E} |N(u) \cup N(v)|	
	&=	\sum_{uv \in E} \big( d(u) + d(v) - | \{ T \in {\mathcal T} \mid uv \in E(T) \}| \big)	\\
	&=	\sum_{v \in V} \left( d(v) \right)^2 - 3 | {\mathcal T} |	\\
	&\ge	\frac{4m^2}{n} - \frac{3}{2}m(k-1).
\end{align*}
Thus there exists an edge $x_0y_0$ such that $|N(x_0) \cup N(y_0)| \ge \frac{4m}{n} - \frac{3}{2}(k-1) \ge n - \frac{2}{3}(k-1)$, where for the second inequality we have used \eqref{massumption} and $n \ge n_1(k)$.  Choose disjoint sets $X_0 \subseteq N(x_0)$ and $Y_0 \subseteq N(y_0)$ so that $X_0 \cup Y_0 = N(x_0) \cup N(y_0)$.  Next we establish a claim which we will use to get upper bounds on $e(X_0)$ and $e(Y_0)$.

\bigskip

\noindent{\it Claim: } If $W \subseteq N(w)$ for some $w \in V$, then $e(W) \le (k-1)|W|$.

\smallskip

\noindent{\it Proof: } Let $I$ be the set of colours appearing on the edges $E(\{w\},W)$ and for every $i \in I$ let $d_i$ be the number of edges in $E(\{w\},W)$ of colour $i$. Let $F$ be the set of edges $uv\in E(W)$ for which $wu$ and $wv$ have the same colour. Then $|F| \le \sum_{i \in I}  \binom{d_i}{2}$. Since there are no rainbow triangles, every edge $uv \in E(W) \setminus F$ is coloured by some $i \in I$.
Since for every $i \in I$ the number of edges in $E(W)$ coloured $i$ is at most $k-d_i$, we have:
\begin{align*}
|E(W)|	&\le	\sum_{i \in I} \left( \binom{d_i}{2} + (k - d_i) \right)	\\
		&=	\sum_{i \in I} \big( (k-1)d_i  - (d_i - 1)( k - \tfrac{1}{2}d_i) \big)	\\
		&\le	(k-1) \sum_{i \in I} d_i	\\
		&= (k-1) |W|,
\end{align*}
establishing the claim.

Define $Z = V \setminus (X_0 \cup Y_0)$.  Applying the claim to each of $E(X_0)$ and $E(Y_0)$, we have:
\begin{equation*}
 m= |E(X_0)| + |E(Y_0)| + |E(X_0,Y_0)| +| E(Z)| +|E(Z,X_0 \cup Y_0)| \le (k-1) (|X_0| + |Y_0| ) + e(X_0,Y_0) + \sum_{z \in Z} d(z).
\end{equation*}
Since $|X_0| + |Y_0| \le n$ and $|Z| \le \frac{2}{3}(k-1)$, this implies
\begin{equation}
\label{xybd} m\le e(X_0,Y_0) + \frac{5}{3}(k-1)n.
\end{equation}

 Let $E(X,Y)$ be a maximum cut, meaning that $\{X,Y\}$ is a partition of $V$ for which $e(X,Y)$ is maximum.
  Let $a_X$ be the average degree of $G[X]$, let $a_Y$ be the average degree of $G[Y]$ and without loss of generality assume  that $a_X \le a_Y$.  We  proceed to prove a series of properties of $X$ and $Y$, eventually showing that they satisfy the conclusions of the lemma.

\begin{enumerate}[label=(\arabic*), labelindent=0em ,labelwidth=0cm, parsep=6pt, leftmargin =4mm]

\item $e(X,Y) \ge \frac{n^2}{4} - \frac{3}{2}(k-1)n$ and $\overline{e}(X,Y) \le \frac{3}{2}(k-1)n$.
\label{missinge}

The first part follows from  inequality \eqref{xybd},
 the fact that $e(X,Y) \ge e(X_0,Y_0)$  (since $(X,Y)$ is a maximum cut), inequality \eqref{massumption} and the assumption that $n \ge n_1(k)$.

The second part follows from the first since
\[ \overline{e}(X,Y) = |X| \cdot |Y| - e(X,Y) \le \tfrac{n^2}{4} - e(X,Y) \le \tfrac{3}{2}(k-1)n. \]

\item Every $x \in X$ satisfies $e(x,Y) \ge \frac{1}{2} \lfloor \frac{n}{2} \rfloor$ and every $y \in Y$ satisfies $e(y,X) \ge \frac{1}{2} \lfloor \frac{n}{2} \rfloor$.
\label{degreen4}

This follows from the assumption that the minimum degree is at least $\lfloor \frac{n}{2} \rfloor$, and the fact that a maximum cut is ``unfriendly'', namely every vertex is adjacent to at least as many vertices on the other side as on its own side.

\item Both  graphs $G[X]$ and $G[Y]$ have maximum degree less than $7k$.
\label{maxdegree}

Suppose for a contradiction that this is false, and assume (without loss of generality) that $x \in X$ has $d_X(x) \ge 7k$. Choose a set $X' \subseteq N(x) \cap X$ with $|X'| = 7k$ and let $I$ be the set of colours appearing on the edges $E(x, X')$.  Let $Y' = N(x) \cap Y$ and note that by (2) we must have $|Y'| \ge \frac{1}{2} \lfloor \frac{n}{2} \rfloor$.


Fix $y' \in Y'$. For each $x' \in N(y') \cap X'$, we have a triangle on $x,x',y'$. The majority colour in that triangle can be the colour of $xy'$ for at most $k-1$ choices of $x'$. For all other choices of $x'$, the colour of $x'y'$ must be in $I$. Hence $|N(y') \cap X'| \le k - 1 + | \{ e \in E(y',X') \mid \mbox{$e$ has a colour in $I$} \} |$. It follows that
\[ \overline{e}(y',X') \ge 6k + 1 -  | \{ e \in E(y',X') \mid \mbox{$e$ has a colour in $I$} \} |. \]
The total number of edges in $G$ with a colour in $I$ is at most $7k^2$.  Thus summing the above inequalities over all $y' \in Y'$ yields $\overline{e}(Y',X') \ge (6k+1)\frac{1}{2} \lfloor \frac{n}{2} \rfloor - 7k^2$ which exceeds $\frac{3}{2}(k-1)n$ for $n \ge n_1(k)$, contradicting \ref{missinge}.

\item $\lfloor \frac{n}{2} \rfloor - 7k < |X|, |Y| < \lceil \frac{n}{2} \rceil + 7k$.
\label{boundxysize}

To show, for example, that $|X| > \lfloor \frac{n}{2} \rfloor - 7k$, take any vertex $y \in Y$. By assumption \eqref{deltaassumption}, its degree in $G$ is at least $\lfloor \frac{n}{2} \rfloor$, and by the previous fact its degree in $G[Y]$ is less than $7k$. Hence $e(y,X) > \lfloor \frac{n}{2} \rfloor - 7k$, implying in particular  $|X| > \lfloor \frac{n}{2} \rfloor - 7k$. \\

Combining the last inequalities with \eqref{deltaassumption} yields:

\item Every $x \in X$ satisfies $\overline{e}(x,Y) < 14k$ and every $y \in Y$ satisfies $\overline{e}(y,X) < 14k$.
\label{missingdeg}

\item Every component of $G[X]$ and $G[Y]$ has size at most $k$.
\label{boundedcomp}

Suppose (for a contradiction) that there is a component of $G[X]$ with size greater than $k$ and choose a tree $T$ in $G[X]$ with exactly $k+1$ vertices.  It follows from \ref{missingdeg} that the number of common neighbours of $V(T)$ in the set $Y$ is at least $|Y| - 14k(k+1) \ge \frac{n}{2} - 14(k+1)^2 > k^2$, using $n \ge n_1(k)$.  Since there are at most $k^2$ edges which have the same colour as an edge of $T$, it follows that there is at least one vertex $y \in Y$ so that $y$ is adjacent to every vertex in $V(T)$ with the added property that no edge in $E(y, V(T))$ shares a colour with an edge in $E(T)$.  However, this gives a contradiction:  It is not possible for all edges in $E(y, V(T))$ to have the same colour (since this set has size $k+1$), but otherwise there is a rainbow triangle containing $y$ and an edge of $T$.

\item $a_X + a_Y \ge k - 1 - \frac{3k^2}{n}$.
\label{axaylower}

Note that $m \le \tfrac{1}{2} a_X |X| + \tfrac{1}{2} a_Y \left(n - |X| \right) + |X| \left( n - |X| \right)$.  The right hand side of this inequality is a quadratic function of $|X|$ for which the maximum is attained at
$|X| = \frac{n}{2} + \frac{a_X - a_Y}{4}$.  This maximum value is equal to $\frac{n^2}{4} + \frac{(a_X + a_Y)n}{4} + \frac{(a_X - a_Y)^2}{16}$.  It follows from \ref{boundedcomp} that $|a_X - a_Y| \le k-1$ and this gives us the following inequality which completes the argument.
\[ \tfrac{n^2}{4} + \tfrac{(k-1)n}{4} - \tfrac{k(k-1)}{2}  \le m \le \tfrac{n^2}{4} + \tfrac{(a_X + a_Y)n}{4} + \tfrac{(k-1)^2}{16}. \]

\item $a_X \le \frac{1}{k+1}$.
\label{minaxay}

(Remember that we assumed $a_X \le a_Y$ and our aim is to show that $a_X=0$.)
Suppose (for a contradiction) that $a_X > \frac{1}{k+1}$. Define $\tilde{a}_X = \lceil a_X - \frac{1}{k+1} \rceil$ and $\tilde{a}_Y = \lceil a_Y - \frac{1}{k+1} \rceil$ and note that our assumptions imply $\tilde{a}_X, \tilde{a}_Y \ge 1$.  Let $X^+ = \{ x \in X \mid d_X(x) \ge \tilde{a}_X \}$ and $X^- = X \setminus X^+$.  Then we have
\[ a_X |X| = \sum_{x \in X} d_X(x) \le (a_X - \tfrac{1}{k+1})|X^-| + k|X^+| = (a_X - \tfrac{1}{k+1}) |X| +  (k - a_X + \tfrac{1}{k+1}) |X^+|. \]
It follows that $|X^+| \ge \frac{|X|}{k(k+1)}$.  Similarly, setting $Y^+ = \{ y \in Y \mid d_Y(y) \ge \tilde{a}_Y \}$ we find that $|Y^+| \ge \frac{|Y|}{k(k+1)}$.  It follows from \ref{axaylower} that $\tilde{a}_X + \tilde{a}_Y \ge a_X + a_Y - \frac{2}{k+1} \ge k-1 - \frac{3k^2}{n} - \frac{2}{k+1} > k-2$, again using $n \ge n_1(k)$.  So, $\tilde{a}_X$ and $\tilde{a}_Y$ are positive integers which sum to at least $k-1$.

For every component of $G[X]$ of size at least $\tilde{a}_X+1$ choose a spanning tree, and let ${\mathcal T}_X$ be the set of these spanning trees.  Similarly let ${\mathcal T}_Y$ be a collection consisting of one spanning tree from each component of $G[Y]$ of size at least $\tilde{a}_Y+1$.  We claim that for every $T \in {\mathcal T}_X$ and $U \in {\mathcal T}_Y$ one of the following holds:
\begin{enumerate}
\item $\overline{e}( V(T), V(U) ) \ge 1$.
\item $E( V(T), V(U) )$ contains an edge with the same colour as an edge of $T$ or $U$.
\end{enumerate}
Were neither condition above to be satisfied, all edges between $V(T)$ and $V(U)$ would need to have the same colour to prevent a rainbow triangle, but then this colour would appear too many times. Indeed, $e(V(T),V(U)) \ge (\tilde{a}_X + 1)(\tilde{a}_Y +1)$ and this product exceeds $k$, because its two factors are greater than $1$ and sum to at least $k+1$.

For every $T \in {\mathcal T}_X \cup {\mathcal T}_Y$, the total number of edges in $G$ with the same colour as some colour in $E(T)$ is less than $k^2$.  Thus, by considering all pairs of trees from ${\mathcal T}_X \times {\mathcal T}_Y$ we get the following bound
\[ \overline{e}(X,Y) \ge | {\mathcal T}_X | \cdot | {\mathcal T}_Y | - k^2 | {\mathcal T}_X | - k^2 | {\mathcal T}_Y |. \]
Every vertex $x \in X^+$ has $d_X(x) \ge \tilde{a}_X$ and must therefore be in a component of $G[X]$ of size at least $\tilde{a}_X + 1$.  It follows that $| {\mathcal T}_X | \ge \frac{|X^+|}{k} \ge \frac{|X|}{k^2(k+1)}$.  A similar argument for $Y$ shows that
$| {\mathcal T}_Y | \ge \frac{|Y|}{k^2(k+1)}$.  Since (using $n \ge n_1(k)$) both of these quantities are larger than $k^2$ we can plug these bounds for $| {\mathcal T}_X|$ and $|{\mathcal T}_Y|$ in the previous equation.  Doing so gives the following inequality which gives us a contradiction (note that we have used \ref{boundxysize} to get the bound $|X| \cdot |Y| \ge \frac{n^2}{4} - 49k^2$ and, again, $n \ge n_1k)$).
\begin{align*}
\overline{e}(X,Y)
	&\ge \frac{ |X| \cdot |Y| }{ k^4(k+1)^2 } - \frac{|X| + |Y|}{k+1}  \\
    &\ge \frac{n^2}{4k^4(k+1)^2} - \frac{49}{k^2(k+1)^2} - \frac{n}{k+1} \\
    &\ge \left( \frac{3}{2}k - \frac{49}{k^2(k+1)^2n} - \frac{1}{k+1} \right)n	\\
    &> \tfrac{3}{2}(k-1)n.
\end{align*}

\end{enumerate}

With this last property in hand, we have all we need to complete the proof.  It follows from \ref{minaxay} and \ref{axaylower} and $n \ge n_1(k)$ that $a_Y \ge k - 1  - \frac{3k^2}{n} - \frac{1}{k+1} \ge k-1 - \frac{1}{k}$.  Note that the graph obtained from $K_k$ by removing one edge has average degree $k-1- \frac{2}{k}$;  more generally any graph on at most $k$ vertices which is not isomorphic to $K_k$ has average degree at most $k-1-\frac{2}{k}$.  Setting $t$ to be the number of components of $G[Y]$ isomorphic to $K_k$ we then have
\begin{align*}
\left( k - 1 - \tfrac{1}{k} \right) |Y|
	&\le		a_Y |Y| 	\\
	&\le		tk(k-1) + (|Y| - tk) (k - 1 - \tfrac{2}{k} ) \\
	&=		|Y|(k-1) - \tfrac{2}{k}( |Y| - tk ).
\end{align*}
It follows from this that $t \ge \frac{|Y|}{2k}$.  Let $s$ be the number of nontrivial components of $G[X]$ (i.e., components with at least two vertices).  By Lemma \ref{nokn}  there is at least one missing edge between every  $K_{k}$ subgraph of $G[Y]$ and every nontrivial component of $G[X]$.  Since each such component contains at most $\frac{k(k-1)}{2}$ edges, we have the bound
\[ m 	\le |Y| (n - |Y|) + \tfrac{k-1}{2}|Y| - s \left(t  - \tfrac{k(k-1)}{2} \right). \]
Now $|Y|(n-|Y|) + \tfrac{k-1}{2}|Y|$ is a quadratic function of $|Y|$ maximized at $|Y| = \frac{n}{2} + \frac{k-1}{4}$ with maximum value $\frac{n^2}{4} + \frac{(k-1)n}{4} + \frac{(k-1)^2}{16}$, so
\[ m \le \tfrac{n^2}{4} + \tfrac{ (k-1)n }{4} + \tfrac{ (k-1)^2}{16} - s \left(t  - \tfrac{k(k-1)}{2} \right). \]
Using our initial lower bound on $m$, and the facts that $t \ge \frac{|Y|}{2k}$, $|Y| > \lfloor \frac{n}{2} \rfloor -7k$, and $n \ge n_1(k)$, shows that $s > 0$ is impossible.  Therefore, $s=0$ and we have that $X$ is an independent set.  We established that all components of $G[Y]$ have size at most $k$ in \ref{boundedcomp} so the proof is now complete.\end{proof}

An easy calculation yields that removing a vertex of degree smaller than $\lfloor \frac{n}{2}\rfloor$ retains inequality \eqref{massumption}, namely:

\begin{lemma}
Let $G = (V,E)$ be a graph with   $n$ vertices and   $m $ edges, where
\begin{equation*}
m \ge \frac{n^2}{4} + \frac{(k-1)n}{4} - \frac{k(k-1)}{2}.\end{equation*}

\noindent Let $v$ be a vertex with degree smaller than $\lfloor \frac{n}{2}\rfloor$, let $m'=|E(G-v)|$ and let $n'=|V(G-v)|=n-1$. Then:
\begin{equation*}
m' \ge \frac{n'^2}{4} + \frac{(k-1)n'}{4} - \frac{k(k-1)}{2}.\end{equation*}

\end{lemma}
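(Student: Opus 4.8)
The plan is to observe that deleting a single vertex $v$ removes exactly $\deg(v)$ edges, so $m' = m - \deg(v)$, and then to check that the required lower bound drops by at least $\deg(v)$ when the vertex count decreases from $n$ to $n-1$. Write $f(n) = \frac{n^2}{4} + \frac{(k-1)n}{4} - \frac{k(k-1)}{2}$ for the expression appearing on the right-hand side, so that the hypothesis reads $m \ge f(n)$ and the desired conclusion reads $m' \ge f(n-1)$. Since $m' = m - \deg(v) \ge f(n) - \deg(v)$, it suffices to prove the purely numerical inequality $f(n) - f(n-1) \ge \deg(v)$.

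First I would compute the one-step difference directly:
\[ f(n) - f(n-1) = \frac{n^2 - (n-1)^2}{4} + \frac{k-1}{4} = \frac{2n + k - 2}{4}. \]
Then I would bound the degree. By hypothesis $\deg(v) < \lfloor \frac{n}{2} \rfloor$, and since the degree is an integer, $\deg(v) \le \lfloor \frac{n}{2} \rfloor - 1 \le \frac{n}{2} - 1 = \frac{2n-4}{4}$. Comparing the two expressions, the inequality $\frac{2n-4}{4} \le \frac{2n+k-2}{4}$ holds because $k \ge 2$ (in fact it would suffice that $k \ge -2$), so $\deg(v) \le f(n) - f(n-1)$. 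Chaining the estimates then gives $m' \ge f(n) - \bigl(f(n) - f(n-1)\bigr) = f(n-1)$, which is exactly the claimed bound once we recall $n' = n-1$.

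There is no genuine obstacle in this argument; it is the routine verification promised by the phrase \emph{an easy calculation} preceding the statement. The only point that needs any care is the treatment of the floor, which is handled by the crude estimate $\lfloor \frac{n}{2} \rfloor - 1 \le \frac{n}{2} - 1$. The comparison even has slack $\frac{k+2}{4} > 0$, so the argument is robust and no boundary cases (parity of $n$, small $k$) cause trouble.
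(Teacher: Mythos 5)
Your proof is correct and is precisely the ``easy calculation'' the paper alludes to (the paper states this lemma without writing out a proof): one computes the one-step difference $f(n)-f(n-1)=\frac{2n+k-2}{4}$ and checks it dominates $\deg(v)\le\lfloor\frac{n}{2}\rfloor-1\le\frac{2n-4}{4}$, with slack $\frac{k+2}{4}$. Nothing to add.
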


The next lemma shows that given inequality \eqref{massumption}, the process of removing vertices of degree smaller than half the current number of vertices cannot continue for too long.

\begin{lemma}\label{sqrt}
Let $G = (V,E)$ be a graph with   $n$ vertices and   $m $ edges, where $n \ge 2k$ and
\begin{equation}\label{massumption0}
m \ge \frac{n^2}{4} + \frac{(k-1)n}{4} - \frac{k(k-1)}{2}.\end{equation}


\noindent Then there exists a subset $U$ of $V(G)$ of size  $n' \ge \sqrt{n}$, such that the graph $G'=G[U]$ has minimal degree at least $\lfloor \frac{n'}{2}\rfloor$,  and its number of edges $m'$ satisfies
\begin{equation}\label{massumption2}
m' \ge \frac{n'^2}{4} + \frac{(k-1)n'}{4} - \frac{k(k-1)}{2}.\end{equation}

\noindent Moreover, $U$ can be obtained by removing sequentially vertices of degree smaller than half the current number of vertices.

\end{lemma}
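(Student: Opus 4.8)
The plan is to exhibit $U$ via exactly the greedy process named in the statement. Starting from $G$, repeatedly delete a vertex whose degree is smaller than $\lfloor j/2\rfloor$, where $j$ denotes the current number of vertices, and stop as soon as the current graph has minimum degree at least $\lfloor j/2\rfloor$. This terminates, since a single vertex vacuously satisfies the stopping condition. Let $G' = G[U]$ be the graph at which it halts, with $n' = |U|$ and $m' = |E(G')|$. Then $\delta(G') \ge \lfloor n'/2\rfloor$ by construction, and since every deletion removes a vertex of degree below half the current order, the preceding lemma applies at each step; by induction inequality \eqref{massumption2} is inherited all the way down to $G'$. Thus the entire content that remains is the size bound $n' \ge \sqrt{n}$.

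First I would argue by contradiction, assuming the process halts at $G'$ with $n' < \sqrt n$. Since $G'$ is reached from $G$ by deleting $n-n'$ vertices, and the vertex removed when the order is $j$ has degree at most $\lfloor j/2\rfloor - 1$, the number of deleted edges is bounded by
\[ \sum_{j=n'+1}^{n}\left(\tfrac{j}{2}-1\right) = \frac{(n+n'+1)(n-n')}{4} - (n-n'). \]
Subtracting this from the hypothesis \eqref{massumption0} on $m = |E(G)|$ gives, after simplification, the lower bound
\[ m' \ge \frac{n'^2 - 3n'}{4} + \frac{(k+2)n}{4} - \frac{k(k-1)}{2}. \]

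Next I would confront this with the trivial upper bound $m' \le \binom{n'}{2}$ for a simple graph on $n'$ vertices. Combining the two inequalities and clearing denominators forces
\[ n'^2 + n' \ge (k+2)n - 2k(k-1). \]
But $n' < \sqrt n$ yields $n'^2 + n' < n + \sqrt n$, so the displayed inequality would require $(k+1)n < \sqrt n + 2k(k-1)$. To close the argument I would observe that $f(n) = (k+1)n - \sqrt n$ is increasing and satisfies $f(2k) = 2k^2 + 2k - \sqrt{2k} \ge 2k^2 - 2k = 2k(k-1)$ (equivalently $4k \ge \sqrt{2k}$), whence $(k+1)n \ge \sqrt n + 2k(k-1)$ for every $n \ge 2k$ — the desired contradiction. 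Therefore $n' \ge \sqrt n$.

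I expect the main obstacle to be quantitative rather than conceptual. The deletion step must be estimated by $\lfloor j/2\rfloor - 1$ and not by the cruder $j/2$: the looser estimate produces only $\tfrac{(k-2)n}{4}$ in place of $\tfrac{(k+2)n}{4}$ above, and already fails to yield a contradiction at $k=3$. One must also verify that the final arithmetic survives throughout the whole admissible range $n \ge 2k$, rather than merely for large $n$, which is why the check is phrased through the monotone function $f$ and its value at the endpoint $n = 2k$. Everything else is routine bookkeeping once the edge-count inequality is assembled correctly.
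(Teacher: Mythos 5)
Your proof is correct and follows essentially the same route as the paper: the identical greedy deletion process (with the preceding lemma preserving inequality \eqref{massumption2} by induction), followed by summing the degrees of deleted vertices and comparing against the trivial bound $m' \le \binom{n'}{2}$ --- your contradiction-style bookkeeping is just an algebraic rearrangement of the paper's direct estimate $m \le \frac{1}{2}\left(\binom{n}{2}+\binom{n'}{2}\right)$. One small remark: your closing caveat is overstated, since the paper's intermediate per-step bound $\frac{j-1}{2}$ (looser than your $\frac{j}{2}-1$, tighter than $\frac{j}{2}$) also suffices, as it gives $n'^2 - n' \ge kn - 2k(k-1) \ge n$ for all $1 \le k \le \frac{n}{2}$ by concavity of the right-hand side in $k$.
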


\begin{proof}
If there is no vertex of degree smaller than $\lfloor \frac{n}{2} \rfloor$, there is nothing to prove.  Suppose that $G$ has a vertex, denote it $x_n$, with degree smaller than $\lfloor \frac{n}{2} \rfloor$. Let $G_{n-1}=G-v$. By the previous lemma, $G_{n-1}$ satisfies \eqref{massumption2}. If all vertices in $G_{n-1}$
have degree at least $\lfloor \frac{n-1}{2} \rfloor$, we are done. If not, remove a vertex $x_{n-1}$ of degree smaller than $\lfloor \frac{n-1}{2} \rfloor$. By the previous lemma the graph $G-x_n-x_{n-1}$ satisfies \eqref{massumption2}. We continue this way, until, having deleted
vertices $x_{n'+1}, x_{n'+2}, \ldots ,x_n$, we obtain a graph $G_{n'}$ all of whose vertices have degrees at least $\lfloor \frac{n'}{2} \rfloor$. The proof of the lemma will be complete if we show that $n'\ge \sqrt{n}$.

Let $U=\{x_1, \ldots,x_{n'}\}$  be the remaining set of vertices.
There are at most $\binom{n'}{2}$ edges of $G$ contained in $U$, and each vertex $x_i$, $i > n'$, is adjacent in $G$ to at most $\frac{i-1}{2}$ vertices $x_j, ~j<i$. Hence

$$m=|E(G)|\le \binom{n'}{2}+\sum_{i=n'+1}^n \frac{i-1}{2}= \binom{n'}{2} +\frac{1}{2}\left ( \binom{n}{2}-\binom{n'}{2} \right)=\frac{1}{2}\left ( \binom{n}{2}+\binom{n'}{2} \right)$$

This, combined with \eqref{massumption0} and the fact that $n \ge 2k$, yields $n'\ge \sqrt{n}$, completing the proof.\end{proof}

\bigskip

\noindent{\it Proof of Theorem \ref{main}.}
Let $k \ge 2$ and let $n \ge n_2(k)$. Let $G$ be an edge-coloured graph on $n$ vertices, with colour classes of size at most $k$ and no rainbow triangle, so that its number of edges $m$ is the largest among all such graphs. We shall show that there exists a graph $H^k_{a,b}$ with $a+b=n$ and $|E(H^k_{a,b})|\ge m$.

Clearly
\begin{equation*}
m = |E(G)| \ge |E(H^k_{ \lfloor \frac{n}{2} \rfloor, \lceil \frac{n}{2} \rceil})| \ge \tfrac{n^2}{4} + \tfrac{(k-1)n}{4} - \tfrac{k(k-1)}{2}.
\end{equation*}

Hence, by Lemma \ref{sqrt},
there exists a subset $U$ of $V=V(G)$  of size $n'\ge \sqrt{n}\ge n_1(k)$,
obtained by deleting sequentially vertices of degree less than half the current number of vertices,
such that   all degrees in $G[U]$ are at least $\lfloor \frac{n'}{2}\rfloor$
and \eqref{massumption2} is satisfied.
By Lemma \ref{mainlemma} applied to $G[U]$, there exists a partition $\{X',Y'\}$ of $U$ so that $X'$ is an independent set, and all components of $G[Y']$ have size at most $k$.

Starting from $G[U]$ and the partition $\{X',Y'\}$, we show how to construct a graph isomorphic to $H^k_{a,b}$, $a+b=n$, having at least $m$ edges. First, we put  vertices $z_{n'+1}, z_{n'+2}, \ldots ,z_n$ back into the graph, in this order, instead of the vertices  $x_{n'+1}, x_{n'+2}, \ldots ,x_n$ that were deleted. At each step we add $z_i$ to the smaller side  of the partition that we have at hand at that step (or to any side, if the two sides are equal), and we connect it to all vertices on the other side. By the property of $x_i$, the addition of $z_i$ adds more  edges
than the number of edges deleted when $x_i$ was removed. At the end of this process, we get a graph $\hat{G}$ with $n$ vertices and at least $m$ edges, together with a partition $\{X,Y\}$ of $V(\hat{G})$ satisfying the conclusion of Lemma \ref{mainlemma}. Note that at this stage we do not care about edge-colours, we just aim to ahow that the number of edges of $\hat{G}$ is at most that of $H^k_{|X|,|Y|}$.

If there is a component $H$ of $\hat{G}[Y]$ which is not a complete graph, modify $\hat{G}$ by adding all edges with both ends in $V(H)$ that are missing.  If there are two components of $\hat{G}[Y]$, say $H_1$ and $H_2$, both of size less than $k$, say $k > |V(H_1)| \ge |V(H_2)|$, increase $|E(\hat{G})|$ by deleting the edges $E(v, V(H_2) \setminus \{v\})$ for some $v \in V(H_2)$, and then addding the edges $E(v, V(H_1))$.  Finally, add to $\hat{G}$ any missing edge between $X$ and $Y$. Our modified $\hat{G}$ is now isomorphic to $H^k_{|X|,|Y|}$, and has at least $m$ edges. This proves the first part of the theorem. For the `moreover' part, observe that if $n' < n$ or if $\hat{G}$ is modified at all, we end up with strictly more than $m$ edges, contradicting the maximality of $m$.
\quad\quad$\Box$

\end{document}